\numberwithin{equation}{section}
\newcommand{\eps}{\varepsilon}
\def\C{\mathbb C}
\def\CC{\widehat{\mathbb C}}
\def\N{\mathbb N}
\def\dim{\operatorname{dim}}
\def\dist{\operatorname{dist}}
\def\diam{\operatorname{diam}}
\def\dens{\operatorname{dens}}
\def\area{\operatorname{area}}
\def\Sing{{\rm sing}}
\def\HD{\text{{\rm HD}}}
\def\es{\emptyset}
\def\sms{\setminus}
\def\sbt{\subset}
\def\spt{\supset}
\def\ov{\overline}
\def\a{\alpha}
\def\b{\beta}
\def\d{\delta}
\def\l{\lambda}
\def\sg{\sigma}
\newtheorem{la}{Lemma}[section]
\newtheorem{thm}{Theorem}[section]
\theoremstyle{definition}
\theoremstyle{remark}
\title[On the Hausdorff dimension of the escaping set]
{On the Hausdorff dimension of the escaping set of certain
meromorphic functions}
 \subjclass{37F10 (primary), 30D05, 30D15 (secondary)}
\thanks{The authors were supported by the EU Research Training
Network CODY. The first author was also supported by the G.I.F., the
German--Israeli Foundation for Scientific Research and Development,
Grant G-809-234.6/2003 and the ESF Research Networking Programme
HCAA. The second author was also supported by Polish MNiSW Grant N
N201 0222 33 and PW Grant 504G 1120 0011 000.}
\author{Walter Bergweiler}
\address{Mathematisches Seminar,
Christian--Albrechts--Universit\"at zu Kiel,
Lude\-wig--Meyn--Str.~4, D--24098 Kiel, Germany}
\email{bergweiler@math.uni-kiel.de}
\author{Janina  Kotus}
\address{Faculty of Mathematics and Information Science,
Warsaw University of Technology, Pl.\ Politechniki 1, 00-661
Warszawa, Poland} \email{J.Kotus@impan.pw.edu.pl}
\date{\today}
\begin{document}

\begin{abstract}
Let $f$ be  a transcendental  meromorphic function
of finite order  $\rho$ for which the set of finite singularities of $f^{-1}$ is bounded.
Suppose that $\infty$ is not an
asymptotic value and that there exists $M \in \mathbb N$ such that
the  multiplicity of all  poles, except possibly finitely many, is
at most $M$. For $R>0$ let $I_R(f)$ be the set of all $z\in\C$ for which
$\liminf_{n\to\infty}|f^n(z)|\geq R$ as $n\to\infty$. Here $f^n$ denotes
the $n$-th iterate of $f$.
Let $I(f)$ be the set of all $z\in\C$ such that  $|f^n(z)|\to\infty$ as
$n\to\infty$; that is, $I(f)=\bigcap_{R>0} I_R(f)$.
Denote the Hausdorff dimension of a set $A\subset\C$ by $\HD(A)$.
It is shown that
$\lim_{R \to \infty} \HD(I_R(f))\leq 2 M \rho/(2+ M\rho)$.
In particular, $\HD(I(f))\leq 2 M \rho/(2+ M\rho)$.
These estimates are best possible: for given $\rho$ and $M$ we construct
a function $f$ such that $\HD(I(f))= 2 M \rho/(2+ M\rho)$ and
$\HD(I_R(f))> 2 M \rho/(2+ M\rho)$ for all $R>0$.

If $f$ is as above but of infinite order, then the area of $I_R(f)$ is zero.
This result does not hold without a restriction on the multiplicity of the poles
\end{abstract}

\maketitle

\section{Introduction and main results}

The  Fatou set $F(f)$ of a (non-linear)
function $f$ meromorphic in the plane is
defined as the set of all points $z\in \mathbb C$ such that   the
iterates  $f^k$   of $f$ are  defined and form  a normal family  in
some neighbourhood of $z$. Furthermore, $J(f)=\CC\sms F(f)$
where $\CC=\C\cup\{\infty\}$ is
called the Julia set  of $f$ and
\[
 I(f) = \{z\in\C: f^n(z) \to \infty \mbox{ as } n \to \infty \}
\]
is called  the escaping set of   $f$. In addition to these sets, we
shall  also  consider for $R>0$ the set
\[
I_R(f)=\{z \in {\mathbb C}:
  \liminf_{n\to \infty}|f^n(z)|\geq R\}.
\]
Note that
\[
  I(f)=\bigcap_{R>0}I_R(f).
\]
It was shown by Eremenko~\cite{Ere89} for entire $f$ and
by Dom\'inguez~\cite{Dom98} for transcendental mero\-morphic $f$ that
$I(f)\neq\emptyset$ and that $J(f)=\partial I(f)$.
For an introduction to the iteration theory of transcendental
meromorphic functions we refer to~\cite{Ber93}. Results on
the Hausdorff dimension of Julia sets and related sets
are surveyed in~\cite{KU2,Stall}.

The  set of singularities  of the inverse function $f^{-1}$ of $f$
coincides with the set of critical values and  asymptotic values of
$f$. We denote  the set of  finite  singularities  of $f^{-1}$ by
$\Sing(f^{-1})$. The Eremenko-Lyubich class $\mathcal B$ consists of
all meromorphic functions  for which  $\Sing(f^{-1})$  is bounded.
Eremenko and Lyubich~\cite[Theorem~1]{EL} proved that if
$f\in \mathcal B$ is entire,
then $I(f)\sbt J(f)$. This result  was  extended  to  meromorphic
functions in $\mathcal B$ by Rippon and Stallard~\cite{RS}. Actually
the proof yields that $I_R(f) \sbt J(f)$ if  $f \in \mathcal B$ and
$R$ is sufficiently large.

For $A\sbt \mathbb C$  we denote by $\HD(A)$  the Hausdorff
dimension of $A$ and by $\area(A)$ the two-dimensional Lebesgue
measure of $A$. McMullen~\cite{McM}  proved hat $\HD(J(\l e^z))=2$
for $\l \in {\mathbb C}\sms \{0\}$ and  that $\area( J(\sin (\a z+
\b z)))> 0 $ for $\a, \b \in {\mathbb C}$, $\a \neq 0$. His proof shows
that the conclusion holds  with $J(\cdot)$ replaced by $I(\cdot)$. Note that
the functions considered  by McMullen are in the class $\mathcal B$
so that  the escaping set is  contained in the Julia  set.

The order  $\rho(f)$  of a  meromorphic  function $f$  is defined by
\[
\rho(f)=\limsup_{r \to \infty}\frac{\log T(r,f)}{\log r}
\]
where $T(r,f)$   denotes  the Nevanlinna characteristic  of $f$;
see~\cite{GO,Hay64,Nev} for the notations of Nevanlinna theory. If
$f$  is entire, then  we may replace  $T(r,f)$   by $\log M(r,f)$
here, where $M(r,f)= \max_{z=r}|f(z)|$.  Thus for entire $f$ we
have~\cite[p.~18]{Hay64}
\[
\rho(f)=\limsup_{r \to \infty}\frac{\log\log M(r,f)}{\log r}.
\]
It is easy  to see  that $\rho(\l e^z)=\rho(\sin (\a z+\b z))=1$  for
$ \l, \a, \b \in \mathbb C$, $ \l, \a \neq 0$.

McMullen's result
that $\HD(J(\l e^z))=2$ was  substantially generalized by
Bara\'nski~\cite{Bar} and Schubert~\cite{Sch07}  who proved  that if
$f\in \mathcal B$ is entire  and $\rho(f) < \infty$, then
$\HD(J(f))=2$. In fact, they show  that $\HD(I_R(f))=2$  for all
$R>0$ under these hypotheses.
Their proofs, which make use of the  logarithmic  change of variable
introduced by Eremenko and Lyubich, show that  the conclusion  holds
more generally for  meromorphic  functions in   $\mathcal  B$ which
have  finite order and  for which  $\infty$ is an asymptotic value.
In fact, such  functions   have a logarithmic singularity  over
$\infty$ and  their dynamics   are in  many ways similar  to those
of entire  functions; see, e.g., \cite{BKZ} or~\cite{BRS}.

The  purpose of this paper  is to show that the situation is very
different for  meromorphic functions of class $\mathcal B$ for which
$\infty$ is not an  asymptotic value.

\

\begin{thm}\label{thm1}
Let $f \in \mathcal B$ be  a transcendental  meromorphic function
with $\rho=\rho(f)< \infty$. Suppose that $\infty$ is not an
asymptotic value  and that  there exists $M \in \mathbb N$ such that
the  multiplicity of all  poles, except possibly finitely many, is
at most $M$. Then
\begin{equation}\label{1a}
\HD(I(f))\leq \frac{2 M \rho}{ 2+ M \rho}
\end{equation}
and
\begin{equation}\label{1b}
\lim_{R \to \infty} \HD(I_R(f))\leq \frac{2 M \rho}{ 2+ M\rho}.
\end{equation}
\end{thm}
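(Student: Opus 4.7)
The plan is to cover $I_R(f)$ by preimage components indexed by sequences of poles, estimate their diameters via Koebe distortion, and use the finite order of $f$ to control the resulting multiseries.

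\emph{Structure near infinity and reduction.} Since $f\in\mathcal B$ and $\infty$ is not an asymptotic value, for all $R$ exceeding the supremum of $|\Sing(f^{-1})|$ the open set $f^{-1}(\{|w|>R\})$ decomposes as a disjoint union $\bigcup_p U_p(R)$ of bounded topological disks, each containing exactly one pole $p$, with $f\colon U_p(R)\to\{|w|>R\}$ a proper branched covering of degree $m_p$ ramified only over $\infty$. Writing $f(z)=a_p(z-p)^{-m_p}+\dots$ locally, the branch of $(1/f)^{1/m_p}$ vanishing at $p$ is a univalent map from $U_p(R)$ onto $\{|\xi|<R^{-1/m_p}\}$; this gives $\diam U_p(R)\asymp|a_p|^{1/m_p}R^{-1/m_p}$ and, by the Koebe distortion theorem, the uniform bound $|(f^{-1})'(w)|\le C|a_p|^{1/m_p}|w|^{-1-1/m_p}$ for every inverse branch landing in $U_p(R)$. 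Since $z\in I_R(f)$ forces $f^n(z)\in\bigcup_pU_p(R)$ for all large $n$,
\[
I_R(f)\sbt\bigcup_{N\ge0}f^{-N}(J_R),\qquad J_R:=\bigcap_{n\ge0}f^{-n}\Bigl(\bigcup_pU_p(R)\Bigr),
\]
and by countable stability of Hausdorff dimension it suffices to bound $\HD(J_R)$.

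\emph{Covering and diameter estimate.} For every finite sequence $\mathbf p=(p_0,\dots,p_{n-1})$ of poles let $V_n(\mathbf p)$ denote the component of $\{z: f^j(z)\in U_{p_j}(R)\text{ for }0\le j\le n-1\}$ obtained by the corresponding choice of inverse branches. These cylinders cover $J_R$, and $V_n(\mathbf p)$ is the image of $U_{p_{n-1}}(R)$ under a composition of univalent inverse branches of $f$. Iterating the derivative bound of Step 1 with Koebe distortion, and using that each inverse branch is applied to points of $U_{p_{j+1}}(R)$ where $|w|\asymp|p_{j+1}|$, one obtains
\[
\diam V_n(\mathbf p)\le C^n|a_{p_{n-1}}|^{1/m_{p_{n-1}}}R^{-1/m_{p_{n-1}}}\prod_{j=0}^{n-2}|a_{p_j}|^{1/m_{p_j}}|p_{j+1}|^{-1-1/m_{p_j}},
\]
while at most $M^n$ distinct cylinders share the same symbolic itinerary.

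\emph{Summation and critical exponent.} One then sums $(\diam V_n(\mathbf p))^s$ over all itineraries, restricting $p_0$ to lie in a fixed disk so as to cover a compact part of $J_R$. The finite-order hypothesis enters through the pole-counting bound $n(r,f)=O(r^{\rho+\eps})$, together with a Poisson--Jensen type estimate controlling $\log|a_p|$ in terms of $T(2|p|,f)=O(|p|^{\rho+\eps})$; combined with $m_p\le M$, the multiseries converges for every $s>2M\rho/(2+M\rho)$, giving $\HD(J_R)\le 2M\rho/(2+M\rho)$ uniformly in $R$, hence both \eqref{1a} and \eqref{1b}. The main obstacle lies in this convergence analysis: the factors $|a_p|^{1/m_p}$ may grow rapidly with $|p|$, so a term-by-term bound on $|a_p|$ is far too weak. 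Instead the contributions of consecutive letters $|a_{p_j}|^{1/m_{p_j}}$ and $|p_{j+1}|^{-1-1/m_{p_j}}$ must be handled jointly via Nevanlinna-averaged estimates, and the uniform multiplicity bound $m_p\le M$ is precisely what allows these local factors to telescope into a global Nevanlinna quantity of order $\rho$, producing the sharp exponent $2M\rho/(2+M\rho)$ rather than a weaker bound.
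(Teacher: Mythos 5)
Your high-level framework matches the paper's Section~3: cover $I_R(f)$ by preimage cylinders indexed by sequences of poles, bound their (spherical) diameters via Koebe distortion applied to the univalent branch of $(1/f)^{1/m_p}$ near each pole, count at most $M^{n-1}$ branches per itinerary, and show that the sum of $t$-th powers of diameters tends to $0$ when $t>2M\rho/(2+M\rho)$. The per-step derivative bound you write down, $|(f^{-1})'(w)|\le C|a_p|^{1/m_p}|w|^{-1-1/m_p}$, agrees with the paper's estimate once one identifies your $|a_p|^{1/m_p}$ with the paper's $|b_j|$, and the reduction to a summability claim for a single series is also correct.

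The genuine gap is exactly where you flag ``the main obstacle.'' Your proposed route---Poisson--Jensen control of $\log|a_p|$ plus ``Nevanlinna-averaged estimates'' that ``telescope'' over consecutive symbols---does not describe a working argument and is not what is needed: the multiseries $\sum_{\mathbf p}(\diam V_n(\mathbf p))^t$ factors cleanly into (essentially) the $n$-th power of the single series $\sum_j\bigl(|b_j|/|a_j|^{1+1/M}\bigr)^t$, so there is nothing to telescope across consecutive letters. What is actually required is a \emph{packing} estimate exploiting the disjointness of the components $U_j$: each $U_j$ contains $D\bigl(a_j,\tfrac{1}{4R}|b_j|\bigr)$, all $U_j$ with $|a_j|\le r$ lie in $D(0,\tfrac32 r)$, and comparing areas gives $\sum_{|a_j|\le r}|b_j|^2\le 36R^2r^2$. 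Combined with H\"older's inequality and the bound $\sigma\le\rho$ on the exponent of convergence of the poles (Lemma~2.4), this yields $\sum_j\bigl(|b_j|/|a_j|^{1+1/M}\bigr)^t<\infty$ for every $t>2M\rho/(2+M\rho)$, which is precisely the paper's Lemma~3.1. The simple pointwise bound $|b_j|\le 4R_0|a_j|$ (which comes from $0\notin U_j$, not from Poisson--Jensen) only gives convergence when $t/M>\rho$, i.e.\ $t>M\rho$, a far weaker threshold, and no estimate on the individual Laurent coefficients can close that gap; it is the quadratic packing inequality together with H\"older that produces the sharp exponent. Your sketch does not supply this mechanism, so the announced convergence threshold is unjustified.
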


\

Note that $I_{S}(f) \sbt I_R(f)$ if $S> R$. Hence
$\HD(I_R(f))$ is a non-increasing  function of $R$ and thus the  limit in
(\ref{1b}) exists. Clearly~(\ref{1a}) follows from~(\ref{1b}) so
that it suffices to prove~(\ref{1b}).

We note that elliptic functions are in ${\mathcal B}$ and have order~$2$.
It was shown in~\cite[Theorem~1.2]{KU} that if
$M$ denotes the maximal multiplicity of the poles of an elliptic
function~$f$, then $\HD(I(f))\leq 2M/(1+M)$. Inequality~\eqref{1a}
generalizes this result.

On the other hand, it was shown in~\cite[Example~3]{Kotus95} that
if $f$ is an elliptic function such that the closure of
the postcritical set is disjoint from the set of poles,
then $\HD(J(f))\geq 2M/(1+M)$. The argument shows that
$\HD(I(f))\geq 2M/(1+M)$.
Thus~\eqref{1a} is best possible if $\rho=2$.
The  following result  shows  that Theorem~\ref{thm1}  is best
possible for all values of~$\rho$.

\

\begin{thm}\label{thm2}
Let $0< \rho < \infty$  and  $M \in \mathbb N$. Then
there exists a  meromorphic function $f\in{\mathcal B}$
of order $\rho$ for which
all poles have  multiplicity $M$ and for which $\infty$ is not  an
asymptotic value such that
\begin{equation}\label{1c0}
\HD(I(f))= \frac{2 M \rho}{ 2+ M \rho}
\end{equation}
and
\begin{equation}\label{1c}
\HD(I_R(f))>  \frac{2 M \rho}{ 2+ M \rho}
\end{equation}
for all $R>0$.
\end{thm}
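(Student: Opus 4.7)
The plan has three parts: exhibit an explicit $f$, deduce the upper bound from Theorem~\ref{thm1}, and build Cantor subsets of $I(f)$ and $I_R(f)$ realizing the two lower bounds. For the construction, I would write $f = h \circ g$, where $g$ is an entire function in $\mathcal{B}$ of order $\rho$ (such functions are available for every $\rho>0$) and $h(w) = (w-a)^{-M}$ with $a \in \C$ chosen to be neither a critical nor an asymptotic value of $g$. A direct check then shows that $f$ is meromorphic of order $\rho$, that its poles are precisely $g^{-1}(a)$ and each has multiplicity $M$, that $\Sing(f^{-1}) \subset h(\Sing(g^{-1})) \cup \{h(\infty)\}$ and so is bounded, and that $\infty$ is not an asymptotic value of $f$ since $h(\infty) = 0$ and the asymptotic values of $f$ are $h$-images of those of $g$ together with $0$. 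For this $f$, the upper bound~\eqref{1c0} follows at once from Theorem~\ref{thm1}.

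For the lower bound matching~\eqref{1c0} I would construct a Cantor-like subset of $I(f)$ by iterated pullback. Near each pole $p$ one has $f(z) \sim c_p(z-p)^{-M}$, so the preimage of $\{|w|\geq T\}$ in a neighbourhood of $p$ is a disk of radius $\asymp T^{-1/M}$, on which $f$ is a branched $M$-fold cover with derivative of size $\asymp T^{(M+1)/M}$. Choosing itineraries $(p_0, p_1, \ldots)$ of poles with $|p_n|\to\infty$ at a suitably slow rate gives nested families of preimage pieces parametrised by the itineraries. The pole-counting function $n(r,f) \lesssim r^{\rho}$ coming from the order hypothesis, together with a construction of $f$ that actually saturates this estimate, would allow a Moran-type or thermodynamic-formalism calculation to produce a Cantor subset of $I(f)$ of Hausdorff dimension exactly $2M\rho/(2+M\rho)$.

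For the strict inequality~\eqref{1c}, I would enlarge the itinerary space. Since points in $I_R(f)$ only have to keep their orbits in $\{|z|\geq R\}$ and need not escape, one may include itineraries through poles of bounded magnitude (say in a fixed annulus containing the circle $|z|=R$). This produces a hyperbolic invariant Cantor subset of $I_R(f)$ strictly larger than the one above, and a Bowen-type pressure computation should give its Hausdorff dimension strictly above $2M\rho/(2+M\rho)$ for every $R>0$. The excess necessarily tends to $0$ as $R \to \infty$, in compatibility with~\eqref{1c0}.

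The main technical obstacle is the precise dimension calculation. The exponent $2M\rho/(2+M\rho)$ interpolates between the pole-counting exponent $\rho$ and the local contraction exponent $(M+1)/M$, and achieving equality requires (i) a construction of $f$ that genuinely realises the maximal pole density admitted by the order, and (ii) sharp distortion and counting estimates for the iterated pullback, including the infinite-alphabet situation inherent in the unbounded pole set. Establishing the strict inequality uniformly in $R$ is a further delicate point, since it must coexist with the sharp equality in~\eqref{1c0}.
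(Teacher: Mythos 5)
Your high-level plan (construct $f$, get the upper bound from Theorem~\ref{thm1}, build Cantor sets in $I(f)$ and $I_R(f)$ for the lower bounds) matches the paper's strategy, and your intuition that the crux is a density/pressure computation for an iterated-pullback Cantor set is correct: the paper proves~\eqref{1c0} via McMullen's density--diameter lemma (Lemma~\ref{lemmamcm}, applied with a slowly growing radius sequence $R_l$), and~\eqref{1c} by invoking Mayer's theorem (Lemma~\ref{4.1}), which rests on Mauldin--Urba\'nski infinite iterated function system theory. However, there is a genuine gap in your construction of~$f$.

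You propose $f = h\circ g$ with $h(w)=(w-a)^{-M}$ and $g$ ``an entire function in $\mathcal B$ of order $\rho$''. For the \emph{upper} bound and for membership in $\mathcal B$ this is fine, but the \emph{lower} bound computation needs much more than that $g$ has the right order: it needs the poles of $f$ (i.e.\ the $a$-points of $g$) to be distributed in a very regular way, and the local scaling $f(z)\sim (b_j/(z-a_j))^M$ to satisfy $|b_j|\asymp|a_j|^{1-\rho/2}$ (equivalently $|g'(a_j)|\asymp|a_j|^{\rho/2-1}$). Both the McMullen density estimate and the verification of the divergence condition in Mayer's remark hinge on these two features, and neither is automatic for a generic entire $g\in\mathcal B$ of order $\rho$. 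For instance, for $g(z)=e^z$ the $a$-points lie on a single vertical line, so the spherical density of $\bigcup U_j$ in large annuli tends to zero and a McMullen-type argument cannot produce dimension $2$ minus a bounded quantity; the same obstruction persists for the obvious order-$\rho$ analogues. The paper avoids this by writing down an explicit meromorphic $g$ whose poles sit at $u_{k,l}=k^{\mu}e^{\pi i l/k}$ (so that $n(r)\sim r^\rho$ \emph{and} the poles fill a definite proportion of each annulus) with residues $v_{k,l}$ satisfying exactly $|v_{k,l}|=|u_{k,l}|^{1-\rho/2}$, and then sets $f=g^M$; it moreover proves boundedness of $g$ on a ``spider's web'' to get $g\in\mathcal B$ and to control the singular values. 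Your proposal as written leaves open precisely the properties that make the sharp dimension achievable, so as it stands it does not close the lower-bound argument. If you replace ``some entire $g$'' with an explicit $g$ engineered to have uniformly distributed $a$-points and the right derivative growth at them, the rest of your outline would then align with the paper's.
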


\

For functions of infinite  order we cannot expect the Hausdorff
dimension of $J(f)$ or  $I_R(f)$ to be  less  than 2. However, we
have the  following  result.

\

\begin{thm}\label{thm3}
Let $f \in \mathcal B$ be  a transcendental  meromorphic for which
 $ \infty$ is not an asymp\-to\-tic value.  Suppose
that  there exists $M \in \mathbb N$ such that   all poles of $f$
have multiplicity  at most $M$. Then
\[
\area\left( I_R(f)\right)=0
\]
for sufficiently large $R$. In particular,
\[
\area\left( I(f)\right)=0.
\]
\end{thm}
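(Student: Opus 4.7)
My approach is to reduce the claim to showing $\area(K_R) = 0$ for all sufficiently large $R$, where
\[
K_R := \{z \in \C : |f^n(z)| > R \text{ for all } n \ge 0\}.
\]
The reduction is standard: if $z \in I_R(f)$ then $|f^n(z)| > R-1$ for all large $n$, so $I_R(f) \subset \bigcup_{N \ge 0} f^{-N}(K_{R-1})$; and since $f$ is meromorphic, each $f^{-N}$ preserves sets of planar measure zero.

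Fix $R_0$ with $\Sing(f^{-1}) \subset D(0,R_0)$. For any $R > R_0$ the hypotheses that $f \in \mathcal B$ and $\infty$ is not an asymptotic value imply that $\{|f| > R\}$ is a disjoint union $\bigsqcup_w V_w$ of bounded Jordan domains, each containing exactly one pole $w$, with $f|_{V_w}\colon V_w \to \{|\zeta|>R\}$ a proper branched cover of degree $m_w \le M$ ramified only at $w$. A Laurent expansion at each pole yields a pointwise derivative estimate
\[
|f'(z)| \ge c_w |f(z)|^{(m_w+1)/m_w} \quad \text{on } V_w,
\]
where $c_w > 0$ depends only on the leading principal-part coefficient at $w$.

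The core of the argument is a weighted self-referential area estimate. Set $\mu(E) := \int_E |\zeta|^{-(2+2/M)}\,d\area(\zeta)$. The change-of-variables formula applied through the covering $f|_{V_w}$, combined with the bound above, gives $\area(V_w \cap f^{-1}(E)) \le m_w c_w^{-2}\, \mu(E)$ for measurable $E \subset \{|\zeta|>R\}$ (using $m_w \le M$ to unify the exponent). Taking $E = K_R$, summing over poles $w$ with $|w| > R$, and using that $V_w$ has diameter much smaller than $|w|$ so that $\mu(V_w \cap K_R)$ is comparable to $|w|^{-(2+2/M)}\area(V_w \cap K_R)$, one arrives at
\[
\mu(K_R) \;\le\; \lambda(R)\, \mu(K_R), \qquad \lambda(R) := \sum_{|w|>R} m_w c_w^{-2}\, |w|^{-(2+2/M)}.
\]
Once $\lambda(R) < 1$, this inequality forces $\mu(K_R) = 0$ and hence $\area(K_R) = 0$.

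The main obstacle is showing $\lambda(R) < 1$ for $R$ large. For $f$ of finite order this is routine from the Nevanlinna pole-count $n(r,\infty) = O(r^{\rho+\varepsilon})$, and is essentially the route behind Theorem~\ref{thm1}; but here $f$ may have infinite order and the coefficients $c_w$ are a priori uncontrolled. The plan is to use the Nevanlinna angular bound $\meas\{\theta : |f(re^{i\theta})| > R\} \le 2\pi m(r,f)/\log R$ in order to obtain a local area estimate on $\bigsqcup_w V_w$ that shrinks as $R \to \infty$ irrespective of the growth of $T(r,f)$. Combined with the multiplicity hypothesis $m_w \le M$, which is essential in view of the remark immediately following the statement, this should yield $\lambda(R) \to 0$ as $R \to \infty$ and complete the proof.
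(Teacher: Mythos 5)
Your framework (reduction to $\area(K_R)=0$, the covering structure of $\{|f|>R\}$, the local derivative estimate, and the change of variables against $\mu(E)=\int_E|\zeta|^{-(2+2/M)}d\area$ leading to $\mu(K_R)\le\lambda(R)\,\mu(K_R)$) is coherent, and the route is genuinely different from the paper's. The paper instead runs a Lebesgue density--point argument: taking a density point $\xi$ of $I_R'=\{z:|f^k(z)|>R\text{ for all }k\geq0\}$ and setting $w_l=f^l(\xi)\in U_{j_l}$, one observes that $I_R'\cap U_{j_l}^0\subset U_{j_l}$, where $U_j^0$ and $U_j$ are the components of $f^{-1}(B(R_0))$ and $f^{-1}(B(R))$ containing $a_j$. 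Koebe gives $D(a_j,\tfrac14R_0^{-1/M}|b_j|)\subset U_j^0$ and $U_j\subset D(a_j,2R^{-1/M}|b_j|)$, so the density of $I_R'$ in a disc of radius $\sim R_0^{-1/M}|b_{j_l}|$ around $w_l$ is $O((R_0/R)^{2/M})$; crucially the coefficient $|b_j|$ cancels in this ratio. Pulling back by a univalent branch of $f^{-l}$ and applying Koebe distortion transports this uniformly small density to a shrinking sequence of discs about $\xi$, contradicting the Lebesgue density theorem. That argument needs no summation over poles and no control of the sizes $|b_j|$ (your $c_w$) whatsoever, which is what makes it work in infinite order.

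The gap in your proposal is the finish. The angular Nevanlinna bound $\meas\{\theta:|f(re^{i\theta})|>R\}\le 2\pi\,m(r,f)/\log R$ does \emph{not} shrink ``irrespective of the growth of $T(r,f)$'': $m(r,f)\le T(r,f)$ sits in the numerator and can be arbitrarily large. What rescues this under the present hypotheses is the theorem of Teichm\"uller cited in Section~2, which gives $m(r,f)=O(1)$ when $f\in\mathcal B$, $\infty$ is not asymptotic and multiplicities are bounded; but you never invoke it, and even granting it, the passage from an angular (hence area) estimate to the coefficient sum $\lambda(R)=\sum_{|w|>R}m_wc_w^{-2}|w|^{-(2+2/M)}$ is not supplied (note $c_w\asymp m_w/|b_w|$, where $b_w$ is the leading Laurent coefficient, and area control of $\bigcup_wV_w$ does not directly control the individual $|b_w|$). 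The clean way to close your argument is a packing bound in the spirit of~\eqref{3f}: fix one large $R_1$, use the pairwise disjointness of the $U_j(R_1)$ together with $D(a_j,\tfrac{1}{4R_1}|b_j|)\subset U_j(R_1)\subset D(0,\tfrac32|a_j|)$ to get $\sum_{|a_j|\le r}|b_j|^2\le Cr^2$ with $C$ independent of the target $R$; a dyadic summation then yields $\sum_j|b_j|^2/|a_j|^{2+2/M}<\infty$, hence $\lambda(R)\to0$. With that input your bootstrap closes, but as written the proof is incomplete and the proposed angular-measure finish is incorrect as stated.
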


\

The proof of Theorem~\ref{thm3} uses well-known techniques, see~\cite{Stallard90}
for a similar argument. In fact, as kindly pointed out to us by Lasse Rempe,
Theorem~\ref{thm3} is implicitly contained in~\cite[Theorem~7.2]{RempeVanStrien}.
However, we shall include the short proof of Theorem~\ref{thm3} for completeness.

Finally  we show that the hypothesis  on the  multiplicity of the
poles is essential.

\

\begin{thm}\label{thm4}
There exists  a transcendental meromorphic $ f \in \mathcal B$  for
which  $ \infty$ is not an asymptotic value  and for  which
\[
\area( I(f))>0.
\]
\end{thm}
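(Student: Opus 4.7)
The plan is to construct $f$ explicitly as an infinite partial fraction
\[
f(z) = \sum_{n=1}^{\infty} \frac{c_n}{(z-p_n)^{m_n}},
\]
with a sparse sequence of poles $p_n \to \infty$ (for concreteness, $p_n = 2^n$), multiplicities $m_n$ increasing very rapidly, and positive coefficients $c_n$ so small that the series converges absolutely and locally uniformly on $\C\sms\{p_n : n\in\N\}$. Around each $p_n$ we fix a small disk $D_n = \{z : |z-p_n|<\rho_n\}$, chosen so that $|f(z)| \le 1$ on $\C\sms\bigcup_n D_n$, while inside $D_n$ the $n$-th summand dominates and $f(z) = c_n(z-p_n)^{-m_n}(1+o(1))$.

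The two structural conditions then follow without much work. That $\infty$ is not an asymptotic value of $f$ is immediate: a curve $\g$ tending to $\infty$ with $|f|\to\infty$ along $\g$ would eventually lie inside $\bigcup_n D_n$, yet would still have to cross infinitely many of the bounded circles $\partial D_n$, on which $|f|\le 1$---a contradiction. For $f\in\mathcal B$ one checks that inside each $D_n$ the only term with critical points is the small perturbation made up of the other summands, so that for sufficiently small $c_n$ the critical points cluster near $p_n$ (a pole), while on $\C\sms\bigcup_n D_n$ the boundedness of $f$ controls the critical values there; an inductive choice of $(c_n,\rho_n,m_n)$ keeps $\Sing(f^{-1})$ bounded.

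The dynamical core is a McMullen-style density argument~\cite{McM}. Inside each $D_n$, the subset on which $|f|>R_n$ is (for suitable large $R_n$) a disk concentric with $p_n$ of relative area $1-\d_n$ in $D_n$, with $\d_n\to 0$ as $m_n\to\infty$. Provided $R_n$ is chosen with $R_n > p_{n_1}$ for some $n_1 > n$, the image $f(\{|f|>R_n\}\cap D_n)=\{|w|>R_n\}$ contains $D_{n_1}$, and a relatively dense subset of $\{|f|>R_n\}\cap D_n$ is mapped onto $D_{n_1}$. Iterating, one obtains within $D_n$ a decreasing sequence of compact sets $E_0 \spt E_1 \spt E_2 \spt \cdots$ such that $f^k$ maps $E_k$ into some $D_{n_k}$, with the relative area of $E_k$ in $D_n$ at least $\prod_{j<k}(1-\d_{n_j})$. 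Choosing $m_n$ so rapidly that $\sum_n \d_n < \infty$ keeps this product bounded away from zero, so $\bigcap_k E_k$ has positive area, and every point of it escapes to $\infty$ under iteration; hence $\area(I(f))>0$.

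The principal obstacle is the simultaneous realisation of the three requirements---summability $\sum\d_n<\infty$, boundedness of $\Sing(f^{-1})$, and the covering property $D_{n_1}\sbt f(D_n)$---which interact nontrivially because increasing $m_n$ aids the first and third but makes the perturbative estimates underlying the second more delicate. The construction must therefore proceed pole by pole: at step $n$, the parameters $(p_n, c_n, m_n)$ are chosen within admissible ranges determined by the previously fixed data $(p_k, c_k, m_k)_{k<n}$, and one has to verify at each stage that all three conditions can be met simultaneously.
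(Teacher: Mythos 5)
Your framework---an infinite partial fraction $\sum_k \eps_k (r_k/(z-a_k))^{m_k}$ with unbounded multiplicities, small near-pole disks outside of which $|f|$ is bounded, and a pullback/density argument---is structurally the same one used in the paper, and your observations about $\infty$ not being an asymptotic value and $\Sing(f^{-1})$ being bounded are essentially right. The genuine gap is in the core density estimate, and it is not a matter of tuning parameters: with one pole per annulus the argument cannot close.

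The problematic claim is that ``a relatively dense subset of $\{|f|>R_n\}\cap D_n$ is mapped onto $D_{n_1}$.'' In local coordinates $\zeta=z-p_n$, $f\approx c_n\zeta^{-m_n}$, and $f$ restricted to $\{|f|>R_n\}\cap D_n$ is an $m_n$-to-$1$ cover of $\{|w|>R_n\}$. The preimage of $D(p_{n_1},\rho_{n_1})$ consists of $m_n$ tiny pieces whose total area, relative to the escape disk $\{|f|>R_n\}\cap D_n$, is of order $\rho_{n_1}^2/(m_n\,|p_{n_1}|^2)$. With one pole per dyadic annulus and $\rho_{n_1}\ll|p_{n_1}|$ this ratio tends to $0$ rapidly, so the nested sets $E_k$ lose essentially all their area at every stage and $\bigcap_k E_k$ has measure zero. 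Note that raising $m_n$ \emph{hurts}: it improves the $1-\d_n$ estimate for $\{|f|>R_n\}$ inside $D_n$, but the density of the preimage of the next target is proportional to $1/m_n$. Your quantity $1-\d_n$ measures how much of $D_n$ escapes past $R_n$, not how much of it lands in the next disk $D_{n_1}$; these are very different, and the second is the one that matters.

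The paper's construction removes this obstruction by using \emph{many} poles per annulus rather than one: the disks $D(a_j,r_j)$, all of radius $<1$, are packed into $\{|z|>2\}$ so densely that the complement $A$ satisfies $\area(A\cap P_n)<1$ for each dyadic annulus $P_n$. Then a pullback lands in $\bigcup_j D(a_j,r_j)$ (rather than in one prescribed disk) with near-full density, and the key estimate becomes $\area(f^{-k}(A^*)\cap P_n)\le 2^{-k}$, proved directly via a Koebe-free change-of-variables bound on $|g'|$ for the inverse branches. This gives $\area(B)>0$ for $B=\C\setminus\bigcup_k f^{-k}(A)$ without any nested-sets bookkeeping. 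Finally, $B\subset I(f)$ up to measure zero is proved not by McMullen's covering lemma but by a density-point argument: if $\xi\in B$ did not escape, a subsequence $f^{m_l}(\xi)$ would accumulate at some finite $w$; since $A$ has definite density near $w$, Koebe distortion along the inverse branch of $f^{m_l}$ contradicts $\xi$ being a density point of $B$. So you would need to replace the single-pole sparsity by a dense packing of poles, and then the nested-sets argument can be replaced (more cleanly) by the area-of-pullbacks plus density-point argument.
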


\

\section{Notations and preliminary Lemmas}
The diameter of a set $K\subset \C$ is denoted by $\diam(K)$. Later we will
also use the area and diameter with respect to the spherical metric~$\chi$.
We will denote them by $\area_{\chi}(K)$ and $\diam_{\chi}(K)$, respectively.

For $ a \in \C$ and $r, R>0$ we  use  the  notation $D(a,r)=\{z\in
\C:  |z-a|<r\}$  and
\[B(R)=\{z \in \C:  |z|>R\}\cup \{\infty\}.
\]
The following  lemma is known as Koebe's distortion theorem and
Koebe's $\frac{1}{4}$-theorem.

\

\begin{la}\label{2.1}
Let $g:D(a,r)\to \C$ be univalent, $0< \lambda <1$ and $z\in D(a,\lambda
r)$. Then
\begin{equation}\label{2a}
\frac{\lambda}{(1+\lambda)^2}|g'(a)|r\leq |g(z)-g(a)|\leq
\frac{\lambda}{(1-\lambda)^2}|g'(a)|r,
\end{equation}
\begin{equation}\label{2b}
\frac{1-\lambda}{(1+\lambda)^3}|g'(a)|\leq |g'(z)|\leq
\frac{1+\lambda}{(1-\lambda)^3}|g'(a)|r
\end{equation}
and
\begin{equation}\label{2c}
g(D(a,r))\spt D\left(g(a),\tfrac{1}{4}|g'(a)|r\right).
\end{equation}
\end{la}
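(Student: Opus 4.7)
The plan is to reduce (\ref{2a})--(\ref{2c}) to the classical normalized Schlicht class $S$ (univalent $G$ on the unit disk with $G(0)=0$, $G'(0)=1$) and derive everything from Bieberbach's inequality $|a_2|\leq 2$ for $G(\zeta)=\zeta+a_2\zeta^2+\cdots$. Given $g$ as in the hypothesis, the normalization $G(\zeta)=(g(a+r\zeta)-g(a))/(rg'(a))$ lies in $S$, so it suffices to establish the corresponding estimates for $G$ at $\zeta=(z-a)/r\in D(0,\lambda)$. Bieberbach's bound in turn comes from the area theorem for class $\Sigma$ (for $h(\zeta)=\zeta+\sum b_n\zeta^{-n}$ univalent on $|\zeta|>1$ one has $\sum n|b_n|^2\leq 1$, proved by computing the area of the omitted set via Green's theorem and requiring non-negativity), applied to $1/\widetilde G(1/\zeta)$ where $\widetilde G(\zeta)=\sqrt{G(\zeta^2)}$ is the odd square root (well defined since $G(\zeta)/\zeta$ is nonvanishing on the simply connected unit disk).

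Assuming $|a_2|\leq 2$, the $\tfrac14$-theorem (\ref{2c}) follows by considering $G_{w_0}(\zeta)=w_0 G(\zeta)/(w_0-G(\zeta))\in S$ for any $w_0\notin G(D(0,1))$; its second Taylor coefficient equals $a_2+1/w_0$, and combining $|a_2|\leq 2$ with $|a_2+1/w_0|\leq 2$ gives $|w_0|\geq\tfrac14$. For the distortion estimates (\ref{2a}) and (\ref{2b}), I would apply $|a_2|\leq 2$ to the Koebe transform
\[
K_\zeta(\eta)=\frac{G\!\left(\frac{\eta+\zeta}{1+\overline{\zeta}\eta}\right)-G(\zeta)}{(1-|\zeta|^2)G'(\zeta)},
\]
which is again in $S$; extracting its second coefficient yields the pointwise inequality
\[
\left|\frac{\zeta G''(\zeta)}{G'(\zeta)}-\frac{2|\zeta|^2}{1-|\zeta|^2}\right|\leq\frac{4|\zeta|}{1-|\zeta|^2}.
\]
Taking real parts and integrating along the radius from $0$ to $\zeta$ gives the two-sided bound on $\log|G'(\zeta)|$ equivalent to (\ref{2b}); integrating $G'$ along the same radius, using the bound just obtained, then yields (\ref{2a}). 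Unwinding the normalization produces the stated factors $(1\pm\lambda)^{\pm k}$.

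Since Lemma~\ref{2.1} is a classical result from univalent function theory, the proof involves no substantive obstacle: the only items requiring care are tracking the exponents of $1\pm\lambda$ through the two integrations and verifying the holomorphic choice of odd square root used in the Bieberbach step. In practice one would simply cite a standard reference rather than reproduce the argument.
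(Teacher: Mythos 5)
Your proposal matches the paper's approach. The paper gives no self-contained proof of this lemma; it merely remarks that the statement follows from the classical normalized Koebe growth, distortion, and $\tfrac14$-theorems via the renormalization $G(\zeta)=(g(a+r\zeta)-g(a))/(rg'(a))$, which is exactly your opening reduction, and the remainder of your argument is the standard area-theorem/Bieberbach/Koebe-transform derivation of those classical statements that the paper leaves to the literature.

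One small repair is needed in your last step. Integrating $G'$ along the radius and using the upper distortion bound yields only the upper estimate in~(\ref{2a}); the modulus of an integral has no useful \emph{lower} bound in terms of pointwise bounds on $|G'|$. For the lower estimate one argues separately: either $|G(\zeta)|\ge \tfrac14\ge \lambda/(1+\lambda)^2$, or else by the $\tfrac14$-theorem the segment $[0,G(\zeta)]$ lies in $G(D(0,1))$, and integrating $|G'|$ over its preimage $\gamma$ (a curve from $0$ to $\zeta$ along which $|w|$ traverses $[0,|\zeta|]$) gives
\[
|G(\zeta)|=\int_\gamma |G'(w)|\,|dw|\ge \int_0^{|\zeta|}\frac{1-t}{(1+t)^3}\,dt=\frac{|\zeta|}{(1+|\zeta|)^2},
\]
using $|dw|\ge d|w|$ and the lower distortion bound. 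With that fix, and unwinding the normalization, the proposal is complete.
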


\

Koebe's  theorem is usually only stated  for  the special  case that $a=0$,
$r=1$, $g(0)=0$  and $g'(0)=1$, but the above  version follows
immediately from this special case.

\

 The following  result is due to Rippon and
 Stallard~\cite[Lemma 2.1]{RS}.

\

\begin{la}\label{2.2}
Let $f\in \mathcal  B$ be transcendental. If $ R>0$ such that
$\Sing(f^{-1}) \sbt D(0,R)$, then all components  of $f^{-1}(B(R))$
are simply-connected. Moreover, if $\infty$ is not an asymptotic
value of $f$, then all  components of $f^{-1}(B(R))$ are  bounded
and contain exactly one pole  of $f$.
\end{la}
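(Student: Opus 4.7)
The plan is to apply covering-space theory to $f$ over the punctured neighborhood of infinity. Set $W:=B(R)\sms\{\infty\}=\{|w|>R\}$. Because $\Sing(f^{-1})\sbt D(0,R)$ is disjoint from $W$, the restriction $f\colon f^{-1}(W)\to W$ is an unramified holomorphic covering, and since $W$ is topologically a punctured disk on $\CC$ with $\pi_1\cong\Z$, every connected component of $f^{-1}(W)$ is biholomorphic to one of two model spaces: (a) the universal cover, a half-plane; or (b) a once-punctured disk realizing a finite cyclic cover of some degree $n$.

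For a component $U$ of $f^{-1}(B(R))$, the set $U^{\ast}:=U\sms f^{-1}(\infty)$ is open, connected, and is a component of $f^{-1}(W)$, so of type (a) or (b). In type (a), $U^{\ast}$ is already simply connected and $U=U^{\ast}$ contains no pole (adding one would introduce a nontrivial loop); hence $U$ is simply connected. In type (b), the unique puncture of $U^{\ast}$ is filled in by a pole of $f$ whose multiplicity equals $n$, producing a topological disk. The alternative sub-case of (b)—that the puncture sits at $\infty\in\CC$ rather than at a finite pole—is excluded by the transcendence of $f$: Picard's great theorem at the essential singularity of $f$ at $\infty$ forbids $U$ from containing a neighborhood of $\infty$ on which $|f|>R$. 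In either case $U$ is simply connected, proving the first assertion.

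For the second assertion assume $\infty$ is not an asymptotic value; I rule out case (a) and then prove boundedness in case (b). In case (a), choose a biholomorphism $\psi\colon H\to U$ with $H=\{\re z>\log R\}$ and $f\circ\psi=\exp$. Along the horizontal ray $z(t)=\log R+t+iy_0$, $\exp(z(t))\to\infty$; I claim $\psi(z(t))\to\infty$ in $\C$ as well. Otherwise a subsequence $\psi(z(t_k))$ would converge to some $w_0\in\overline U\cap\C$; continuity of $\psi^{-1}$ rules out $w_0\in U$, hence $w_0\in\partial U$; but then continuity of $f$ combined with $|\exp|\to\infty$ forces $f(w_0)=\infty$, making $w_0$ a pole—impossible, since every pole lies in the interior of the open set $f^{-1}(B(R))$. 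Thus the ray's image in $U$ is a continuous curve going to $\infty$ in $\C$ along which $f\to\infty$, contradicting the hypothesis. Hence only case (b) occurs and $U$ has exactly one pole.

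Finally, in case (b), uniformize by the unit disk via $\psi\colon\mathbb{D}\to U$ with $\psi(0)$ the pole and, after a rotation, $f\circ\psi(z)=R/z^n$. If $U$ were unbounded then $\psi$ would be unbounded on $\mathbb{D}$, so the cluster set of $\psi$ at some $\zeta\in\partial\mathbb{D}$ would contain $\infty$. Prime-end theory then yields a curve in $\mathbb{D}$ approaching $\zeta$ along which $\psi\to\infty$; its image in $U$ is a curve tending to $\infty$ in $\C$ along which $f\to R/\zeta^n$, a value of modulus $R$. This would exhibit $R/\zeta^n$ as an asymptotic value of $f$; but $\Sing(f^{-1})$ is closed and compactly contained in the open disk $D(0,R)$, so cannot contain any value of modulus $R$—contradiction. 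The main technical obstacle throughout is the passage from sequential boundary behavior to a genuine continuous asymptotic curve; in the exclusion of (a) this is direct because $f\circ\psi=\exp$ pins $\psi$ down along explicit rays, while the boundedness argument in (b) requires invoking prime-end theory or a Lindel\"of-type result applied to an auxiliary bounded holomorphic function such as $1/(\psi-z_0)$ for some $z_0\notin\overline U$.
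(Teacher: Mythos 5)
The paper does not prove this lemma itself---it cites Rippon--Stallard [RS, Lemma~2.1]---so there is no in-paper argument to compare against; I assess your proposal on its own terms. Your covering-space route is sound for most of the statement. Since $\{|w|>R\}$ avoids $\Sing(f^{-1})$, the restriction of $f$ to a component $U^{*}$ of $f^{-1}(\{|w|>R\})$ is an unbranched cover of a punctured disc, hence conformally a half-plane (universal cover) or an annulus of infinite modulus (finite cyclic cover); in the annulus case the degenerate complementary component must be a finite pole, with the alternative at $\infty$ correctly excluded by the big Picard theorem; in either case $U$ is simply connected. Your argument that the half-plane case forces $\infty$ to be an asymptotic value, by following the image of a horizontal ray and ruling out a finite subsequential limit via ``poles are interior,'' is also correct, as is the ``exactly one pole'' conclusion in the annulus case.

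The gap is in the boundedness step. From ``$\psi$ is unbounded on $\mathbb{D}$'' you infer $\infty\in C(\psi,\zeta)$ for some $\zeta\in\partial\mathbb{D}$, and then assert that prime-end theory gives a curve to $\zeta$ along which $\psi\to\infty$. This does not follow: $\infty\in C(\psi,\zeta)$ only puts $\infty$ in the \emph{impression} of the prime end at $\zeta$, not among its \emph{principal points}, and it is the latter that characterizes accessibility. There are unbounded simply connected domains for which $\infty$ is an inaccessible boundary point (take a bounded oscillation/comb example with an inaccessible boundary point and apply a M\"obius map sending that point to $\infty$), and the Lindel\"of theorem you mention runs in the opposite direction (from a curve limit to a nontangential limit), so it cannot manufacture the curve. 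A clean repair that stays inside your framework: pick $R_{0}<R$ with $\Sing(f^{-1})\subset D(0,R_{0})$, which is possible because $\Sing(f^{-1})$ is compact and contained in the open disc $D(0,R)$; let $U_{0}$ be the component of $f^{-1}(B(R_{0}))$ containing $U$; by your own case analysis applied to $U_{0}$ (and the hypothesis that $\infty$ is not asymptotic), $U_{0}$ is of annulus type, so $f|_{U_{0}}\colon U_{0}\to B(R_{0})$ is a proper map of finite degree. One then checks that $\overline{U}\cap\C\subset U_{0}$ and that $f(\overline{U}\cap\C)\subset\overline{B(R)}$, while $\overline{B(R)}$ is a compact subset of $B(R_{0})$; hence $\overline{U}\cap\C\subset(f|_{U_{0}})^{-1}\bigl(\overline{B(R)}\bigr)$ is compact, so $U$ is bounded, with no appeal to accessibility of $\infty$ at all.
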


\

The following result is known as Iversen's theorem~(\cite[p.~171]{GO} or~\cite[p.~292]{Nev}).

\

\begin{la}\label{2.3}
Let $f$ be a transcendental
meromorphic function for which $\infty$ is not an
asymptotic value. Then $f$  has infinitely  many poles.
\end{la}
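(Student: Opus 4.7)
The plan is to prove the contrapositive: assuming $f$ is transcendental and meromorphic with only finitely many poles, I will deduce that $\infty$ is an asymptotic value of $f$. Choose $R_0>0$ so that all poles of $f$ lie inside $D(0,R_0)$; then $f$ is holomorphic on $\{|z|>R_0\}$, and since $f$ is not rational, $\infty$ is an essential singularity of this restriction. Set $M=\sup_{|z|=R_0}|f(z)|$. For any $K>M$, the open set
\[
\Omega_K=\{z\in\C:|z|>R_0,\ |f(z)|>K\}
\]
has closure contained in $\{|z|>R_0\}$ (because $|f|\le M<K$ on the circle $\{|z|=R_0\}$), so any bounded component $C$ of $\Omega_K$ would be compactly contained in the region where $f$ is holomorphic and satisfy $|f|=K$ on $\partial C$, contradicting the maximum principle. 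Hence every component of $\Omega_K$ is unbounded.

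Next I claim that on each unbounded component $V$ of $\Omega_K$ the modulus $|f|$ is itself unbounded. Otherwise, with $|f|\le M'$ on $V$, define
\[
u(z)=\begin{cases}\log|f(z)/K|,& z\in V,\\ 0,& z\in\C\sms V.\end{cases}
\]
This function is non-negative, bounded by $\log(M'/K)$, continuous on $\C$ (since $|f|\to K$ on $\partial V$), harmonic in $V$ (because $|f|>K>0$ precludes zeros of $f$), identically zero off $V$, and at any $z_0\in\partial V$ it satisfies the sub-mean inequality trivially because $u(z_0)=0$ while $u\ge 0$ on every surrounding circle. Thus $u$ is subharmonic and bounded on $\C$, so by the Liouville theorem for bounded subharmonic functions it is constant, forcing $u\equiv 0$ and contradicting $u>0$ on $V$.

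Now pick $K_n\to\infty$ with $K_1>M$ and build inductively a nested chain $V_1\spt V_2\spt\cdots$ of unbounded components $V_n$ of $\Omega_{K_n}$: given $V_n$, the previous step yields $z_{n+1}\in V_n$ with $|f(z_{n+1})|>K_{n+1}$, and the component of $\Omega_{K_{n+1}}$ containing $z_{n+1}$ is connected and meets $V_n$, hence lies in $V_n$. To conclude I construct a continuous curve $\gamma:[0,\infty)\to\C$ with $\gamma([n,n+1])\sbt V_n$ and $|\gamma(t)|\to\infty$; then $|f(\gamma(t))|>K_n$ for $t\ge n$, so $f(\gamma(t))\to\infty$ and $\infty$ is an asymptotic value of $f$, contradicting the hypothesis. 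The main technical obstacle is making $\gamma$ a \emph{proper} path rather than one whose vertices merely escape every compact set: since each $V_n$ is only an arbitrary unbounded open connected set, an arc joining $z_n$ to $z_{n+1}$ in $V_n$ could a priori dip into $\{|z|<n\}$. This is handled by a standard end-selection argument, choosing coherent unbounded components of $V_n\sms\ov{D(0,r_n)}$ (for a sequence $r_n\to\infty$) via a K\"onig-type compactness lemma on the tree of ends, and routing the connecting arcs through ever-larger annuli.
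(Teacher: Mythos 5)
The paper does not prove this lemma; it simply cites it as Iversen's theorem, with references to Goldberg--Ostrovskii and Nevanlinna. Your argument is a complete and essentially correct proof of the required special case via the standard tract construction: showing that every component of $\Omega_K=\{z:|z|>R_0,\ |f(z)|>K\}$ is unbounded, that $|f|$ is unbounded on each such component (your subharmonic-Liouville step is clean and correct --- $u$ is continuous, satisfies the local sub-mean inequality at interior, exterior, and boundary points alike, is bounded, hence constant, hence $\equiv 0$, a contradiction), and then building a nested chain $V_1\supset V_2\supset\cdots$ of tracts along which $|f|$ blows up. The one point worth flagging is the end: your worry that the concatenated curve $\gamma$ might fail to be proper, and the resulting appeal to a K\"onig-type end-selection on a tree of ends, is unnecessary. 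Whatever arcs you choose inside the $V_n$, the resulting $\gamma$ satisfies $|\gamma(t)|\to\infty$ automatically. Indeed, suppose $\gamma(t_k)\to z^*$ along some $t_k\to\infty$. Since $\gamma(t)\in V_1\subset\Omega_{K_1}$ for $t\ge1$ and $\overline{\Omega_{K_1}}\subset\{|z|>R_0\}$ (the closure misses $\{|z|=R_0\}$ because $|f|\le M<K_1$ there), we get $|z^*|>R_0$, so $f$ is holomorphic at $z^*$. On the other hand, for every $n$ one eventually has $\gamma(t_k)\in V_n$, so $|f(\gamma(t_k))|>K_n$, and by continuity $|f(z^*)|\ge K_n$ for all $n$, which is impossible. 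Replacing the end-selection paragraph with this short observation tightens the proof without changing its substance.
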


\

Let $(a_j)$ be a sequence of non-zero complex numbers such that
$\lim_{j\to \infty} |a_j|=\infty$. Then
\[
\sg=\sg((a_j))=\inf\left\{ t>0: \sum_{j=1}^\infty |a_j|^{-t}< \infty
\right\}
\]
is called  the exponent  of convergence of the sequence $(a_j)$.
Here we  use the convention that
$\inf\es=\infty$, meaning  that
$\sg=\infty$ if  $\sum_{j=1}^\infty |a_j|^{-t}= \infty$  for all
$t>0$.

\

The  following  lemma  is standard~\cite[p.~26]{Hay64}.

\

\begin{la}\label{2.4}
Let $f$ be a transcendental  meromorphic function and let $ \sg$ be
the exponent of convergence of the  non-zero poles of $f$. Then $\sg
\leq \rho(f)$.
\end{la}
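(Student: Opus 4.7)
The plan is to bound the exponent of convergence $\sigma$ by the growth of the pole-counting function $n(r,f)$ and then to bound that growth by the Nevanlinna characteristic $T(r,f)$. Writing as usual $n(r,f)$ for the number of poles of $f$ in $\{|z|\leq r\}$ counted with multiplicity and $N(r,f)$ for its logarithmic average, these two estimates chain together to give $\sigma\leq\rho(f)$ directly from the definition of $\rho(f)$.

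The first step is the classical identity
\[
\sigma=\limsup_{r\to\infty}\frac{\log n(r,f)}{\log r}=:\tau,
\]
valid for any sequence of non-zero points tending to infinity. The inequality $\sigma\leq\tau$ follows by Abel summation: if $s>\tau$ then $n(r,f)\leq r^s$ for large $r$, so
\[
\sum_j |a_j|^{-u}=u\int_0^\infty n(r,f)\,r^{-u-1}\,dr
\]
converges for every $u>s$. For the reverse, if $s<\tau$ then $n(r_k,f)\geq r_k^s$ along some sequence $r_k\to\infty$, and then
\[
\sum_{|a_j|\leq r_k}|a_j|^{-u}\geq r_k^{-u}n(r_k,f)\geq r_k^{s-u}\to\infty
\]
for any $u<s$, forcing $\sigma\geq s$ and hence $\sigma\geq\tau$.

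The second step is the chain of elementary estimates
\[
n(r,f)\leq \int_r^{er}\frac{n(t,f)}{t}\,dt = N(er,f)-N(r,f)\leq N(er,f)\leq T(er,f),
\]
where the first inequality uses the monotonicity of $n(\cdot,f)$, the middle equality is the definition of $N$ applied on $[r,er]$, and the last inequality is immediate from $T=m+N$ with $m\geq 0$. Taking logarithms, dividing by $\log r$, and letting $r\to\infty$ yields
\[
\tau\leq\limsup_{r\to\infty}\frac{\log T(er,f)}{\log r}=\rho(f).
\]
Combining this with the first step gives $\sigma\leq\rho(f)$.

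There is no real obstacle here; this is the standard short proof recorded in Hayman. The only mildly subtle point is the preliminary translation of the convergence condition on $\sum|a_j|^{-t}$ into a growth condition on the counting function $n(r,f)$, which is entirely routine.
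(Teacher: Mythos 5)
Your proof is correct and is exactly the standard Hayman argument that the paper invokes by citation (the paper gives no proof of its own, referring to \cite[p.~26]{Hay64}): translate the convergence of $\sum|a_j|^{-t}$ into a growth bound on the counting function via Stieltjes/Abel summation, then chain $n(r,f)\leq N(er,f)\leq T(er,f)$. The only cosmetic point is that only the direction $\sigma\leq\tau$ is needed, and one should note that the counting function of the sequence $(a_j)$ is bounded above by the Nevanlinna $n(r,f)$, so any ambiguity about counting with multiplicity is harmless.
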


\

We  mention  that a result of Teichm\"uller~\cite{Tei}
says that if $f \in
\mathcal B$ is  transcendental, if $\infty$  is not  an asymptotic
value of $f$ and if there exists $M\in \N$ such  that all  poles of
$f$ have multiplicity at most $M$, then $m(r,f)=O(1)$ as $r\to
\infty$. This easily implies that the exponent of convergence of the
non-zero poles of $f$ is actually  equal to $\rho(f)$ in this case.

\

\section{Proof of Theorem~\ref{thm1}}\label{proofthm1}

By Lemma~\ref{2.3}, $f$  has  infinitely many poles.
Let $(a_j)$ be  the sequence  of poles $f$,
ordered  such that $|a_j|\leq  |a_{j+1}|$ for all $j$,
and let $m_j$  be the   multiplicity of
$a_j$. Then
\[
f(z)\sim\left(\frac{b_j}{z-a_j}\right)^{m_j} \quad \mbox{as}\quad
z\to a_j
\]
for some $b_j \in \C \sms\{0\}$. We  may  assume that  $|a_j| \geq
1$ for all $j\in\N$. Let $R_0>1$ such that $\Sing(f^{-1})\sbt D(0, R_0)$
and $|f(0)|<R_0$.

Lemma~\ref{2.2}  says that if $R\geq R_0$, then  all  components of
$f^{-1}(B(R))$  are  bounded  and simply-connected and each
component contains
exactly one pole.  We denote the component containing  $a_j$ by
$U_j$ and  choose a conformal map
$\phi_j: U_j \to D(0, R^{-1/m_j})$
satisfying $\phi_j(a_j)=0$. Then   $|f(z) \phi_j(z)^{m_j}|\to 1$ as $z
$ approaches  the  boundary  of $U_j$. Since  $|f(z)
\phi_j(z)^{m_j}|$ remains  bounded near $a_j$ and is non-zero in
$U_j$, we  deduce from  the maximum principle that $|f(z)
\phi_j(z)^{m_j}|=1$  for all  $z\in U_j\sms \{a_j\}$ and that
$|\phi_j'(a_j)|=1/|b_j|$.
We may  actually  normalize $\phi_j$
such that
$\phi_j'(a_j)=1/b_j$.
Denote the inverse function of
$\phi_j$ by $\psi_j$. Since $\psi_j(0)=a_j$ and $\psi_j'(0)=b_j$ we deduce from~(\ref{2c})
that
\begin{equation}\label{3a}
U_j=\psi_j(D(0, R^{-1/m_j})) \supset D\left(a_j, \frac{1}{4}|b_j|
R^{-1/m_j}\right)\supset D\left(a_j, \frac{1}{4R}|b_j| \right).
\end{equation}
Since $|f(0)|<R$ we have $0\notin U_j$. Thus~\eqref{3a} implies in  particular that
\[
\frac{1}{4R}|b_j| \leq |a_j|
\]
for all $R \ge R_0$ and hence that
\begin{equation}\label{3b}
|b_j|\leq 4 R_0 |a_j|.
\end{equation}
We  note that $\psi_j$  actually extends to a  map univalent in
$D(0,R_0^{-1/m_j})$.
Applying~(\ref{2a}) with
\[
\lambda=\left(\frac{R}{R_0}\right)^{-1/m_j}=\left(\frac{R_0}{R}\right)^{1/m_j}
\]
we find that
\[
U_j\sbt D\left(a_j,
\frac{\lambda}{(1-\lambda)^2}|b_j|R^{-1/m_j}\right).
\]
Choosing $R \geq  2^M R_0$ we  have $\lambda \leq \frac{1}{2} $ and
hence
\begin{equation}\label{3c}
U_j\sbt D\left(a_j,2|b_j| R^{-1/M}\right),
\end{equation}
provided  $j$  is so large  that $m_j\leq M$. Combining (\ref{3a})
and (\ref{3c}) we  thus have
\begin{equation*}\label{3d}
D\left(a_j,\frac{1}{4R}|b_j|\right)  \sbt U_j \sbt D\left(a_j,2
R^{-1/M}|b_j| \right)
\end{equation*}
for large $j$.
Combining (\ref{3b}) and (\ref{3c}) we  see that
\[
U_j \sbt D\left(a_j,8 R_0|a_j|R^{-1/M} \right).
\]
Choosing $R \geq \left(16 R_0\right)^M$  we thus have
\begin{equation}\label{3e}
U_j \sbt D\left(a_j,\frac{1}{2} |a_j| \right)
 \sbt D\left(0,\frac{3}{2} |a_j| \right).
\end{equation}
Next  we note  that the $U_j$  are pairwise  disjoint. Combining
this with (\ref{3a}) and  (\ref{3e}) we see  that if $n(r)$  denotes
the
 number of $a_j$  contained in the closed disc $\ov{D(0,r)}$, then
\[
\begin{aligned}
\frac{\pi}{16 R^2}\sum_{j=1}^{n(r)}|b_j|^2&=\area\left(
\bigcup_{j=1}^{n(r)}D\left(a_j, \frac{1}{4R}|b_j|\right)\right)\\
&\leq   \area\left( \bigcup_{j=1}^{n(r)} U_j
\right)\\
&\leq   \area \left(
D\left(0,\frac{3}{2}r\right)\right)\\
& = \frac{9\pi}{4} r^2.
\end{aligned}
\]
Hence
\begin{equation}\label{3f}
\sum_{j=1}^{n(r)}|b_j|^2\leq 36R^2 r^2 .
\end{equation}

\

We  shall use~(\ref{3f}) to prove the following result.

\

\begin{la}\label{3.1}
If
\[
t>  \frac{2M \rho}{2+ M \rho},
\]
then
\[
\sum_{j=1}^\infty\left(\frac{|b_j|}{|a_j|^{1+1/M}}\right)^t <
\infty.
\]
\end{la}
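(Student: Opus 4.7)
The plan is to combine the packing estimate~\eqref{3f} on the residues $|b_{j}|$ with the Nevanlinna-type bound on the counting function of the poles through a dyadic decomposition and H\"older's inequality.

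First I would reduce to the range $t<2$, which costs nothing since $2M\rho/(2+M\rho)<2$. Indeed, \eqref{3b} gives $|b_{j}|/|a_{j}|^{1+1/M}\le 4R_{0}|a_{j}|^{-1/M}\to 0$, so once the sum is shown to converge for some $t_{0}\in(2M\rho/(2+M\rho),2)$, convergence for every $t>t_{0}$ follows, and letting $t_{0}$ decrease to the threshold covers all $t>2M\rho/(2+M\rho)$. Fix such a $t<2$. I would then partition $\N$ into the dyadic shells $S_{k}=\{j:2^{k}\le |a_{j}|<2^{k+1}\}$ and set $N_{k}=\#S_{k}$. Applying~\eqref{3f} at $r=2^{k+1}$ gives
\[
\sum_{j\in S_{k}}|b_{j}|^{2}\le 144\,R^{2}\cdot 4^{k},
\]
while Lemma~\ref{2.4}, via the elementary estimate $n(r)r^{-(\rho+\eps)}\le \sum_{|a_{j}|\le r}|a_{j}|^{-(\rho+\eps)}<\infty$, yields $N_{k}\le C_{\eps}\,2^{k(\rho+\eps)}$ for every $\eps>0$.

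On each shell, H\"older's inequality with conjugate exponents $2/t$ and $2/(2-t)$ gives
\[
\sum_{j\in S_{k}}|b_{j}|^{t}\le N_{k}^{1-t/2}\left(\sum_{j\in S_{k}}|b_{j}|^{2}\right)^{t/2}\le C\, N_{k}^{1-t/2}\,2^{kt},
\]
and combining with the trivial bound $|a_{j}|^{-t(1+1/M)}\le 2^{-kt(1+1/M)}$ on $S_{k}$ yields
\[
\sum_{j\in S_{k}}\!\left(\frac{|b_{j}|}{|a_{j}|^{1+1/M}}\right)^{\!t}\le C\,2^{-kt/M}\,N_{k}^{1-t/2}\le C_{\eps}\,2^{k\left[(\rho+\eps)(1-t/2)-t/M\right]}.
\]
Summing over $k\ge 0$ produces a convergent geometric series precisely when $(\rho+\eps)(1-t/2)<t/M$; the limiting identity $\rho(1-t/2)=t/M$ rearranges to $t=2M\rho/(2+M\rho)$, so for $t$ strictly exceeding this threshold one can choose $\eps>0$ small enough to make the exponent negative, and the desired convergence follows.

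The one delicate point is the choice of H\"older exponents: the pair $(2/t,\,2/(2-t))$ is exactly what is needed to balance the $L^{2}$-packing estimate against the Nevanlinna count on each dyadic shell, and the verification that the resulting critical exponent collapses to $2M\rho/(2+M\rho)$ is then a routine algebraic rearrangement.
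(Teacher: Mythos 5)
Your argument is correct and follows essentially the same route as the paper: the same dyadic shell decomposition $\{2^{k}\le|a_{j}|<2^{k+1}\}$, the same H\"older pair $(2/t,\,2/(2-t))$, the same packing estimate~\eqref{3f}, and Lemma~\ref{2.4} as the input on the exponent of convergence. The only real difference is how that last ingredient is deployed. You first pass from $\sum_{j}|a_{j}|^{-(\rho+\eps)}<\infty$ to the counting bound $N_{k}\lesssim_{\eps}2^{k(\rho+\eps)}$ and then feed $N_{k}^{1-t/2}$ into H\"older, which forces an $\eps$-regularization and a final ``choose $\eps$ small'' step. The paper instead keeps a weight $|a_{j}|^{-(1-s+1/M)}$ inside the H\"older product and picks $s$ so that the conjugate sum is exactly $\sum_{j}|a_{j}|^{-\a}$ with $\a>\rho$ and the remaining geometric ratio is $2^{t(1-s)}<1$; this avoids any $\eps$ at the cost of an opaque-looking choice of $s$. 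Your version also makes explicit the reduction to $t<2$, needed for the H\"older exponents to be legitimate (and justified via~\eqref{3b} and the eventual decay of the summands), a point the paper leaves tacit. Both proofs are sound and of comparable length; yours is perhaps the more transparent, the paper's the more streamlined.
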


\begin{proof}
We put
\[
s= \frac{\rho}{2}\left( \frac{t}{2}-1\right) + 1 +
\frac{t}{2M}.
\]
Then
\[
s>  \frac{\rho}{2}\left( \frac{M \rho}{2+ M \rho}-1\right) + 1+
\frac{\rho}{2+M\rho}=1.
\]
For $l \geq 0$ we  put
\[
P(l)=\left\{j\in\N:  n\left(2^l\right) \leq j < n\left(2^{l+1}\right)\right\}
= \left\{j\in\N:  2^l\leq |a_j|< 2^{l+1} \right\}
\]
and
\[
S_l = \sum_{j\in P(l)}\left(\frac{|b_j|}{|a_j|^{1+1/M}}\right)^t
= \sum_{j\in P(l)}\left(\frac{|b_j|}{|a_j|^{s}}\right)^t
\left(\frac{1}{|a_j|}\right)^{t(1-s+1/M)}.
\]
We now apply H\"older's inequality, with  
%$p=\frac{2}{t}$ and $q=\frac{2}{2-t}$.
$p=2/t$ and $q=2/(2-t)$. 
Putting
\[
\a=t\left(1-s+\frac{1}{M}\right) \frac{2}{2-t}=t \frac{2M\rho
+2}{2M}
> \rho
\]
we   obtain
\[
S_l\leq \left( \sum_{j\in P(l)}
\frac{|b_j|^2}{|a_j|^{2s}}\right)^{t/2} \left( \sum_{j\in P(l)}
\frac{1}{|a_j|^{\a}}\right)^{(2-t)/2}.
\]
Since $\a > \rho$  the series $\sum_{j=1}^\infty|a_j|^{-\a}$
converges by Lemma~\ref{2.4}. Thus
\[
\left(\sum_{j \in P(l)}
\frac{1}{|a_j|^{\a}}\right)^{(2-t)/2}\leq A:=
\left(\sum_{j=1}^ \infty
\frac{1}{|a_j|^{\a}}\right)^{(2-t)/2} <\infty.
\]
We now see, using (\ref{3f}), that
\[
\begin{aligned}
S_l
&
\leq  A \left(\sum_{j\in P(l)}
\frac{|b_j|^2}{|a_j|^{2s}}\right)^{t/2}
\\ &
\leq A \left( \frac{1}{(2^l)^{2s}}\sum_{j\in P(l)}
|b_j|^2\right)^{t/2}
\\ &
\leq  \frac{A}{2^{lst}}
\left(36R^2 2^{2(l+1)}\right)^{t/2}
\\ &
= A(12R)^t \left( 2^{t(1-s)}\right)^l.
\end{aligned}
\]
Since $t(1-s)< 0$, the series $\sum_{l=0}^\infty S_l$  converges.
\end{proof}

%\

Continuing with the  proof of Theorem~\ref{thm1} we  note  that in
each simply-connected  domain $D \sbt B(R)\sms \{\infty\}$ we can
define all  branches of the  inverse  function of~$f$. Let $g_j$ be a
branch of $f^{-1}$ that maps $D$ to $U_j$. Thus
\begin{equation}\label{3g}
g_j(z)=\psi_j\left(\frac{1}{z^{1/m_j}}\right)
\end{equation}
for some  branch of the $m_j$-th root. We  obtain
\[
g_j'(z)=-\psi_j'\left(\frac{1}{z^{1/m_j}}\right)\frac{1}{m_j
z^{1+1/m_j}}.
\]
Since  we assumed  that $R \geq 2^M R_0$  we  deduce from (\ref{2b})
with $\lambda=\frac{1}{2}$ that
\begin{equation}\label{3g1}
|g_j'(z)|\leq \frac{12 |\psi_j'(0)|}{|z|^{1+1/M}} = \frac{12
|b_j|}{|z|^{1+1/M}},
\end{equation}
for $z\in D \sbt B(R) \sms \{\infty\}$, provided   $j$  is so large
that $m_j \leq M$.
From~(\ref{3c}) we deduce that
\[
\diam(U_k)\leq \frac{4}{R^{1/M}}|b_k|.
\]
Moreover, if $U_j\sbt B(R)$,
then
\[
\begin{aligned}
\diam g_j(U_k)& \leq \sup_{z \in U_k}|g_j'(z)| \diam U_k\\
&  \leq \frac{12 |b_j|}{(\frac{1}{2}|a_k|)^{1+1/M}}
\frac{4}{R^{1/M}}|b_k|\\
&= 2^{1/M} 24 \frac{4}{R^{1/M}}|b_j|\frac{|b_k|}{|a_k|^{1+1/M}}.
\end{aligned}
\]
Induction shows that if
$U_{j_1},U_{j_2},\ldots,U_{j_l}\subset B(R)$, then
\begin{equation}\label{3h}
\begin{aligned}
&\quad \
\diam \left(\left(g_{j_1}\circ g_{j_2}\circ  \ldots \circ
g_{j_{l-1}}\right) (U_{j_l})\right)\\
&\leq
(2^{1/M}
24)^{l-1}\frac{4}{R^{1/M}}|b_{j_1}|\frac{|b_{j_2}|}{|a_{j_2}|^{1+1/M}}\ldots
\frac{|b_{j_l}|}{|a_{j_l}|^{1+1/M}}.
\end{aligned}
\end{equation}
In order to obtain an estimate for the spherical diameter, we estimate
the spherical distance
$\chi(z_1,z_2)$
of  two points
$z_1, z_2 \in D(a_j, \frac{1}{2}|a_j|)$. We have
\[
\chi(z_1, z_2)
=\frac{2|z_1-z_2|}{\sqrt{1+|z_1|^2}\sqrt{1+|z_2|^2}}
\leq \frac{2|z_1-z_2|}{1+\frac{1}{4}|a_j|^2}
\leq \frac{8|z_1-z_2|}{1+|a_j|^2}
\leq \frac{8|z_1-z_2|}{|a_j|^{1+1/M}}.
\]
Thus
\[
\diam_{\chi}(K)\leq \frac{8}{|a_j|^{1+1/M}} \diam(K)
 \]
for $K \sbt U_j$ and hence (\ref{3h}) yields
\begin{equation}\label{3i}
\diam_{\chi} \left(\left(g_{j_1}\circ g_{j_2}\circ  \ldots \circ
g_{j_{l-1}}\right) (U_{j_l})\right)
\leq
(2^{1/M} 24)^{l-1} \frac{32}{R^{1/M}} \prod_{k=1}^l
\frac{|b_{j_k}|}{|a_{j_k}|^{1+1/M}}.
\end{equation}
Now there are $m_{j_k}$  branches of the inverse function of $f$
mapping $U_{j_{k+1}}$ into  $U_{j_k}$, for $k=1,2, \ldots, l-1$. Overall
we  see  that  there  are
\[
\prod_{k=1}^{l-1} m_{j_k}\leq M^{l-1}
\]
sets of diameter bounded  as  in (\ref{3i})  which  cover all those
components $V$ of $f^{-l}(B(R))$ for which  $f^{k}(V)\sbt
U_{j_{k+1}}\subset B(R)$ for $k=0,1, \ldots, l-1$.
We denote by $E_l$ the collection of all components $V$ of
$f^{-l}(B(R))$ for which  $f^{k}(V)\subset B(R)$ for $k=0,1,\ldots,l-1$.

Next we note that
(\ref{3e}) implies that if $U_j\cap B(3R)\neq \es,$ then $|a_j| >
2R$ and $U_j\subset B(R)$.
We conclude that $E_l$ is a cover of the set
\[
\{z\in B(3R): f^k(z)\in B(3R)\ \mbox{for}\  1\leq  k \leq
l-1\}.
\]
Moreover, if
$t>2 M\rho/(2+ M\rho)$, then
\[
\begin{aligned}
\sum_{V\in E_l}\left(\diam_{\chi}(V)\right)^t
& \leq M^{l-1} \left((2^{1/M}
24)^{l-1}\frac{32}{R^{1/M}}\right)^t \sum_{j_1=n(R)}^\infty\ldots
\sum_{j_l=n(R)}^\infty \prod_{k=1}^l\left(
\frac{|b_{j_k}|}{|a_{j_k}|^{1+1/M}}\right)^t\\
& = \frac{1}{M}\left(\frac{32}{(2R)^{1/M}24}\right)^t
\left( M (2^{1/M} 24)^{t}
\sum_{j=n(R)}^\infty
\left(\frac{|b_j|}{|a_j|^{1+1/M}}\right)^t \right)^l.
\end{aligned}
\]
Lemma~\ref{3.1} implies that
\[
M(2^{1/M}24)^t \sum_{j=n(R)}^\infty
\left(\frac{|b_j|}{|a_j|^{1+1/M}}\right)^t <
1
\]
for large $R$.
For such $R$  we find  that
\[
\lim_{l \to \infty} \sum_{V\in E_l}\left(\diam_{\chi}(V)\right)^t=0
\]
and thus
\[
\HD\left(\left\{ z \in B(3R): f^k(z)\in B(3R)\ \mbox{for all}\ k \in
\N\right\}\right)\leq t.
\]
Hence $\HD(I_{3R}(f))\leq t$. As
$t>2 M\rho/(2+ M\rho)$
was arbitrary, the   conclusion follows.

\

\section{Lower bounds for the Hausdorff dimension}
\label{lowerbounds}
In order to prove Theorem~\ref{thm2},
we shall use results of Mayer~\cite{Ma} and McMullen~\cite{McM}.
For subsets $A,B$
of the plane (or sphere)
we define the
Euclidean and the spherical  density of $A$ in $B$ by
\[
\dens(A,B)=\frac{\area(A\cap B)}{\area(B)}
\quad\text{and}\quad
\dens_{\chi}(A,B)=\frac{\area_{\chi}(A\cap B)}{\area_{\chi}(B)}.
\]
Note that if
\begin{equation}\label{4m}
B\subset\left\{z\in\C: R<|z|<S\right\},
\end{equation}
then
\[
\frac{4}{(1+S^2)^2}\area(B)
\leq
\area_{\chi}(B)
\leq
\frac{4}{(1+R^2)^2}\area(B)
\]
and thus
\begin{equation}\label{4n}
\left(\frac{1+R^2}{1+S^2}\right)^2
\dens(A,B)
\leq
\dens_{\chi}(A,B)
\leq
\left(\frac{1+S^2}{1+R^2}\right)^2
\dens(A,B)
\end{equation}
if $B$ satisfies~\eqref{4m}

In order to state McMullen's result,
consider for $l\in \N$ a
collection $E_l$ of disjoint compact subsets of $\widehat{\C}$
such that the following two conditions are satisfied:
\begin{enumerate}
\item[(a)] every element of $E_{l+1}$ is
contained in a
unique element of $E_l$;
\item[(b)] every element of $E_l$ contains at least one element of
$E_{l+1}$.\end{enumerate}
Denote by $\overline{E}_l$ the union of all elements of $E_l$ and
put $E=\bigcap^\infty_{l=1} \overline{E}_l$. Suppose that
$(\Delta_l)$ and
$(d_l)$ are sequences of positive real numbers such that if $B\in
E_l$, then
\[\dens_{\chi}(\overline{E}_{l+1},B)\geq\Delta_l\]
and
\[\diam_{\chi}(B)\leq d_l.\]
Then we have the following result~\cite{McM}.

\

\begin{la} \label{lemmamcm}
Let $E$, $E_l$, $\Delta_l$ and $d_l$ be as above. Then
\[\limsup_{l\to \infty}\frac{\sum^{l+1}_{j=1}\;|\log\Delta
_j|}{|\log d_l|}\geq n-\dim E.\]
\end{la}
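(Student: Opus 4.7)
The plan is to follow McMullen's mass-distribution strategy: construct a Borel probability measure $\mu$ supported on $E$, estimate its mass on small spherical discs, and invoke Frostman's mass distribution principle to bound $\dim E$ from below.

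I would first build $\mu$ as a weak-$*$ limit of probability measures $\mu_l$, each absolutely continuous with respect to spherical area and supported on $\ov{E}_l$. Set $\mu_1$ proportional to spherical area on $\ov{E}_1$, and inductively distribute, within each $B\in E_l$, the mass $\mu_l(B)$ uniformly with respect to spherical area over $\ov{E}_{l+1}\cap B$. Since $\mu_{l+1}(B)=\mu_l(B)$ for every $B\in E_l$, weak-$*$ compactness on the compact set $\ov{E}_1$ produces a limit measure $\mu$ with $\supp\mu\sbt E$ and $\mu(B)=\mu_l(B)$ for every $B\in E_l$.

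Next I would derive a cylinder estimate. For $B\in E_l$ with parent $B'\in E_{l-1}$, the construction together with $\area_{\chi}(\ov{E}_l\cap B)\leq\area_{\chi}(B)$ and the density hypothesis $\dens_{\chi}(\ov{E}_l,B')\geq\Delta_{l-1}$ gives
\[
\mu(B)\leq\frac{\area_{\chi}(B)}{\Delta_{l-1}\,\area_{\chi}(B')}\,\mu(B').
\]
Telescoping along the unique chain $B_1\spt B_2\spt\cdots\spt B_l=B$ with $B_j\in E_j$ yields
\[
\mu(B)\leq\frac{C_0\,\area_{\chi}(B)}{\prod_{j=1}^{l-1}\Delta_j},\qquad B\in E_l,
\]
with $C_0$ depending only on $\mu_1$. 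Given a spherical disc $D_{\chi}(x,r)$, I would choose the unique $l$ with $d_{l+1}<r\leq d_l$; any $B\in E_{l+1}$ meeting $D_{\chi}(x,r)$ has $\diam_{\chi}B\leq d_{l+1}\leq r$ and therefore lies in $D_{\chi}(x,2r)$. Summing the cylinder estimate over such $B$ gives
\[
\mu(D_{\chi}(x,r))\leq\frac{C_0\,\area_{\chi}(D_{\chi}(x,2r))}{\prod_{j=1}^{l}\Delta_j}\leq\frac{C_1\,r^n}{\prod_{j=1}^{l}\Delta_j},
\]
using that the spherical area of a disc of small radius $r$ in $\widehat{\C}$ is comparable to $r^n$ with $n=2$.

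Finally, for any $s<n-\limsup_{l\to\infty}\bigl(\sum_{j=1}^{l+1}|\log\Delta_j|/|\log d_l|\bigr)$, the inequality $\sum_{j=1}^{l+1}|\log\Delta_j|\leq(n-s)|\log d_l|$ holds for all large $l$, which by the preceding estimate implies $\mu(D_{\chi}(x,r))\leq C_2\,r^s$ uniformly on the associated range of scales. The mass distribution principle then gives $\dim E\geq s$, and the lemma follows on letting $s$ tend to the limit superior. The main technical obstacle is upgrading the scale-by-scale estimate, which a priori only holds for $r\in[d_{l+1},d_l]$, to a uniform estimate on an entire interval $(0,r_0)$ as required by Frostman's principle; this is handled by routine interpolation between consecutive scales, together with the observation that the $\limsup$ is stable under the off-by-one shift between my bound and the statement's indexing, since $\sum_{j=1}^{l+1}|\log\Delta_j|\geq\sum_{j=1}^{l}|\log\Delta_j|$ so proving the inequality with the smaller sum implies it with the larger.
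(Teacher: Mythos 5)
The paper does not include a proof of this lemma: it cites McMullen's original article, remarking only that McMullen works with Euclidean density and that the spherical variant follows directly. Your mass-distribution argument is exactly McMullen's proof, transposed to spherical density, and it is correct: the inductive construction of $\mu$, the cylinder estimate $\mu(B)\leq C_0\area_{\chi}(B)/\prod_{j=1}^{l-1}\Delta_j$ obtained from the density hypothesis, the observation that the disjoint level-$(l+1)$ pieces meeting $D_{\chi}(x,r)$ lie in $D_{\chi}(x,2r)$ because their diameters are at most $d_{l+1}<r$, and the final appeal to Frostman's lemma with $n=2$ are all as in McMullen. The off-by-one point you raise is indeed harmless and is easiest to see directly: your disc estimate involves $\prod_{j=1}^{l}\Delta_j$, while the hypothesis $\sum_{j=1}^{l+1}|\log\Delta_j|\leq(n-s)|\log d_l|$ gives $\prod_{j=1}^{l+1}\Delta_j\geq d_l^{n-s}$, and since $\Delta_{l+1}\leq1$ this already forces $\prod_{j=1}^{l}\Delta_j\geq d_l^{n-s}$, which is what you need.
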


\

We remark that McMullen worked with the Euclidean density, but the
above lemma follows directly from his result.

We shall use Lemma~\ref{lemmamcm} to prove~\eqref{1c0}.
Of course, it follows from~\eqref{1c0} that
\begin{equation} \label{1c1}
\HD(I_R(f))\geq  \frac{2 M \rho}{ 2+ M \rho},
\end{equation}
for all $R>0$,
but the application of Lemma~\ref{lemmamcm}  does not seem to yield~\eqref{1c},
which says that we have strict inequality in~\eqref{1c1}.
However, in order to illustrate the method, we shall first use Lemma~\ref{lemmamcm}
to prove~\eqref{1c1}. We will then describe the modifications that have to be
made in order to prove~\eqref{1c0}.

The proof of~\eqref{1c} is based on
the following result due to Mayer~\cite{Ma}, which he obtained
using the theory  of infinite iterated function  systems  developed
by Mauldin and Ur\-ba\'nski~\cite{MaU},

\

\begin{la}\label{4.1}
Let $f$  be a transcendental  meromorphic  function  with $ \rho=
\rho(f)< \infty$. Suppose that  $f$  has  a pole $a \in \C \sms
\ov{\Sing(f^{-1})}$ and  denote by $M$ the multiplicity of $a$.
Suppose also that there are a  neighbourhood  $D$ of $a$ and
constants $K
>0$ and $ \a > -1-1/M$ such that $|f'(z)| \le  K |z|^\a $ for  $z
\in f^{-1}(D)$. Then
\begin{equation}\label{4a}
   \HD (J(f))\geq \frac{\rho}{\a+ 1 + 1/M}.
\end{equation}
\end{la}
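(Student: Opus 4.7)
The plan is to construct an infinite conformal iterated function system (IFS) of contractions of a disk $D$ around $a$ into itself, whose limit set is contained in $J(f)$, and then apply the Bowen-type formula of Mauldin and Urba\'nski to bound its Hausdorff dimension from below.

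First I would choose $\delta>0$ so small that $D:=D(a,2\d)$ is disjoint from $\ov{\Sing(f^{-1})}$; this is possible by hypothesis. The equation $f(z)=a$ has infinitely many non-critical solutions $(w_k)$ with $|w_k|\to\infty$, and on $D$ there are corresponding univalent branches $g_k:D\to V_k$ of $f^{-1}$ with $g_k(a)=w_k$. Separately, since $a$ is a pole of multiplicity $M$ with $a\notin\ov{\Sing(f^{-1})}$, the local form $f(z)\sim(b/(z-a))^M$ and Lemma~\ref{2.2} give, for sufficiently large $R$, univalent inverse branches $h_1,\dots,h_M:B(R)\to U_a$ associated with the $M$-th root parametrisation $\psi_a$ of $U_a$ exactly as in~(3.1) of Section~\ref{proofthm1}; by (3.5) one has $U_a\sbt D$ once $R$ is large enough.

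Next I would assemble the IFS: for each $k$ with $V_k\sbt B(R)$ (all but finitely many, since $|w_k|\to\infty$ and $\diam V_k\to 0$) and each $i\in\{1,\dots,M\}$, set
\[
\Phi_{k,i}:=h_i\circ g_k\colon D\to h_i(V_k)\sbt U_a\sbt D.
\]
These are univalent contractions of $D$ into itself with essentially disjoint images. The open-set condition and uniform bounded distortion required by Mauldin--Urba\'nski follow from Koebe's theorem (Lemma~\ref{2.1}) applied on definite enlargements of $D$ and $B(R)$ to which $g_k$ and $h_i$ extend univalently. A point $z$ in the limit set satisfies $f^{2n}(z)\in D$ and $f^{2n-1}(z)\in V_{k_n}$; choosing $(k_n)$ to run to $\infty$ one gets uniform expansion of $f^{2n}$ at $z$, ruling out normality and placing the limit set in $J(f)$.

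Then I would carry out the derivative estimates. For $z\in D$ and $w=g_k(z)\in V_k$, the hypothesis $|f'(w)|\leq K|w|^\a$ gives
\[
|g_k'(z)|=\frac{1}{|f'(w)|}\;\geq\;\frac{1}{K|w|^\a}\;\geq\;\frac{c_1}{|w_k|^\a}.
\]
From the $M$-fold branching of $f$ at $a$, Koebe's theorem applied to $\psi_a$ exactly as in (3.3)--(3.4) yields
\[
|h_i'(\zeta)|\;\geq\;\frac{c_2}{|\zeta|^{1+1/M}}\qquad\text{for }\zeta\in B(R).
\]
Composing and using $|g_k(z)|\asymp|w_k|$,
\[
|\Phi_{k,i}'(z)|\;\geq\;\frac{c_3}{|w_k|^{\a+1+1/M}}.
\]
Finally, the Mauldin--Urba\'nski theorem identifies the Hausdorff dimension of the limit set as the infimum of $s\geq 0$ for which $\sum_{k,i}\|\Phi_{k,i}'\|_D^s<\infty$. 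From the bound above,
\[
\sum_{k,i}\|\Phi_{k,i}'\|_D^s\;\geq\;M c_3^{\,s}\sum_{k}|w_k|^{-s(\a+1+1/M)},
\]
and this diverges whenever $s(\a+1+1/M)<\rho$, because $(w_k)$ has exponent of convergence equal to $\rho$: the upper bound $\leq\rho$ is Lemma~\ref{2.4} applied to $f-a$, while equality uses Nevanlinna's second fundamental theorem together with the fact that $a\notin\ov{\Sing(f^{-1})}$ is not a Picard-exceptional value. Hence $\HD(J(f))\geq\rho/(\a+1+1/M)$.

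The hard part is a careful verification of all Mauldin--Urba\'nski hypotheses (open-set condition, cone condition at the boundary, uniform bounded distortion) for the infinite family $\{\Phi_{k,i}\}$, together with the sharp equality of the exponent of convergence of $(w_k)$ with $\rho$ rather than a mere inequality. Uniform constants in the derivative estimates are secured by applying Koebe's theorem on an enlarged disk, using the upper distortion estimates (3.3)--(3.4) and the lower size estimate (3.1) of Section~\ref{proofthm1}, so that no deterioration occurs as $|w_k|\to\infty$.
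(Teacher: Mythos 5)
The paper does not prove Lemma~4.1: it is quoted from Mayer~\cite{Ma}, with the comment that Mayer derives it via the theory of infinite conformal iterated function systems of Mauldin and Urba\'nski. Your proposal follows exactly that route (branches of $f^{-2}$ through the pole $a$ composed with branches to the $a$-points, Koebe distortion, the finiteness exponent $\theta_S$ and $\HD\geq\theta_S$), so in outline it matches the cited proof, and the derivative estimates and IFS set-up are sound.

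There is, however, one genuine gap. You need the exponent of convergence $\sigma$ of the $a$-point sequence $(w_k)$ to equal $\rho$, and you justify the inequality $\sigma\geq\rho$ by ``Nevanlinna's second fundamental theorem together with the fact that $a$ is not a Picard-exceptional value.'' That is not sufficient: $a$ being non-Picard-exceptional only guarantees infinitely many $a$-points, and the second fundamental theorem only bounds $\sum_a\delta(a)\leq 2$; it still permits $\delta(a)=1$ and hence $N(r,a)=o(T(r,f))$ along a sequence, i.e.\ $\sigma<\rho$, for up to two values of $a$, and nothing in the second fundamental theorem alone tells you that your particular pole $a$ is not one of them. What actually closes this gap is the full strength of $a\notin\ov{\Sing(f^{-1})}$: the function $g=1/(f-a)$ then lies in $\mathcal B$, has $\infty$ non-asymptotic (since $a$ is not asymptotic for $f$), and has only simple poles (since $a$ is not a critical value of $f$). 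Teichm\"uller's theorem, recalled in Section~2 of the paper after Lemma~\ref{2.4}, then gives $m(r,g)=m(r,a,f)=O(1)$, so $N(r,a)=T(r,f)+O(1)$ and $\sigma=\rho$. You should replace the second-fundamental-theorem appeal by this argument (or an equivalent one that genuinely uses $a\notin\ov{\Sing(f^{-1})}$ rather than mere non-exceptionality). With that correction, the remaining steps (verification of the open set, bounded distortion and cone conditions on definite enlargements of $D$, and $\HD(J_S)\geq\theta_S$ from Mauldin--Urba\'nski) are routine.
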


\

Actually Mayer~\cite[Remark 3.2]{Ma} points out that if $(z_n)$
denotes the sequence  of $a$-points  and if
\begin{equation}\label{4b}
\sum_{n=1}^\infty |z_n|^{-\rho}
\end{equation}
diverges, then we  have  strict inequality in (\ref{4a}). Moreover,
his proof  shows  that if $f$  has infinitely many poles $a$ which
satisfy the   hypothesis  of Lemma~\ref{4.1} and if the series
(\ref{4b}) diverges, then
\begin{equation}\label{4c}
   \HD(I_R(f))>   \frac{\rho}{\a+ 1 + 1/M}
\end{equation}
for each $R>0$.

\section{Construction of the example} \label{construction}
In order to construct a function $f$ to which the results
of the previous section  can be
applied we put $\mu=2/\rho$ and define
\begin{equation}\label{4d}
g(z)=2\sum_{k=1}^{\infty} \frac{k^{\mu k}z^k}{ z^{2k}-k^{2\mu k}}.
\end{equation}
We note   that  if $k \geq \left(2|z|\right)^{1/\mu}$, then
\[
\left|\frac{k^{\mu k}z^k}{ z^{2k}-k^{2\mu k}}\right|\leq
\frac{k^{\mu k}|z|^k}{k^{2\mu k}- |z|^{2k}}\leq 2
\frac{|z|^k}{k^{\mu k}}\leq 2^{1-k}.
\]
Thus the series in (\ref{4d}) converges  locally uniformly and hence
it defines  a function  $g$  meromorphic in $\C$.  The poles  of $g$
are at the points
\[
u_{k,l}=k^{\mu}\exp( \pi  i l /k),
\]
where $ k\in \N$ and  $0\leq  l \leq 2k-1$. With $v_{k,l}=k^{\mu-1}
\exp(\pi i l (1-k)/k)$ we have
\[
g(z)=\sum_{k=1}^\infty \sum_{l=0}^{2k-1}\frac{v_{k,l}}{z-u_{k,l}}.
\]
Note that
\begin{equation}\label{4e}
|v_{k,l}|= k ^{\mu -1}=|u_{k,l}|^{1-1/\mu}=|u_{k,l}|^{1-\rho/2}.
\end{equation}
We will show  that  $g$ is bounded  on the 'spider's web'
$W=W_1\cup W_2$ where
\[
W_1= \bigcup_{n \geq 1} \left\{ z: |z|=\left(n+
\tfrac{1}{2}\right)^{\mu} \right\}
\]
and
\[
W_2=\bigcup_{n \geq 2}
\left\{re^{i \pi (2m-1)/{2n}}: \left(n-\tfrac{1}{2}\right)^\mu \leq r \leq
\left(n+\tfrac{1}{2}\right)^\mu, \, 1\leq m \leq 2n \right\}.
\]
Let first $z\in W_1$, say
$|z|=\left(n+\frac{1}{2}\right)^\mu$ where $n \in \N$.
Then
\[
\begin{aligned}
\frac{1}{2}|g(z)|& \leq \sum_{k=1}^n  \frac{k^{\mu k}|z|^k}{
|z|^{2k}-k^{2\mu k}}+ \sum_{k=n+1}^{\infty} \frac{k^{\mu k}|z|^k}
{k^{2\mu k}- |z|^{2k}}\\
& = \sum_{k=1}^n  \frac{k^{\mu k}}{|z|^{k}-k^{\mu
k}}\frac{|z|^{k}}{|z|^{k}+k^{\mu k}} + \sum_{k=n+1}^\infty
\frac{k^{\mu k}}{k^{\mu k}+|z|^{k} }\frac{|z|^{k}}{k^{\mu k}-|z|^{k}}\\
&\leq \sum_{k=1}^n  \frac{k^{\mu k}}{|z|^{k}-k^{\mu k}} +
\sum_{k=n+1}^\infty \frac{|z|^{k}}{k^{\mu k}-|z|^{ k}}\\
&= \sum_{k=1}^n
\frac{1}{\left(\frac{n+\frac{1}{2}}{k}\right)^{\mu k}-1} +
\sum_{k=n+1}^{\infty} \frac{1}{\left(\frac{k}{n+\frac{1}{2}}
\right)^{\mu k}-1}\\
& = \Sigma_{1,n} + \Sigma_{2,n}
\end{aligned}
\]
Since $\log x \geq (x-1)\log 2 $ for $ 1\leq x \leq 2$ we see that
if $\frac{n}{2}\leq  k \leq n $, then
\[
\left(\frac{n+\frac{1}{2}}{k}\right)^{\mu k}
= \exp \left( \mu k
\log \left(\frac{n+\frac{1}{2}}{k} \right)\right)
\geq \exp\left(\mu k \frac{n+\frac{1}{2}-k}{k}\log 2 \right)
= 2^{\mu (n + \frac{1}{2}-k)}.
\]
With $l=n+1-k $ we deduce that
\begin{equation}\label{4f}
\begin{aligned}
\Sigma_{1,n}& \leq\sum_{k=1}^{\left[\frac{n}{2}\right]}\frac{1}{2^{\mu
k}-1} + \sum_{k=\left[\frac{n}{2}\right]+1}^{n}\frac{1}{2^{\mu(n
+\frac{1}{2}- k)}-1}\\
&=\sum_{k=1}^{\left[\frac{n}{2}\right]}\frac{1}{2^{\mu k}-1} +
\sum_{l=1}^{n-\left[\frac{n}{2}\right]}\frac{1}{2^{\mu(l
-\frac{1}{2})}-1}\\
& \leq \sum_{k=1}^{\infty}\frac{1}{2^{\mu k}-1} +
\sum_{l=1}^{\infty}\frac{1}{2^{\mu(l -\frac{1}{2})}-1}=: C.
\end{aligned}
\end{equation}
Similarly we  obtain
\begin{equation*}
\Sigma_{2,n}\leq
\sum_{k=n+1}^{2n}\frac{1}{\left(\frac{k}{n+\frac{1}{2}}\right)^{ \mu
k}-1}+ \sum_{k=2n+1}^{\infty}\frac{1}{2^{\mu k}-1}.
\end{equation*}
We note that if $n+1 \leq k \leq 2n$, then
\begin{equation*}
\begin{aligned} \left(\frac{k}{n+\frac{1}{2}}\right)^{ \mu k}
&=\exp\left(\mu k \log
\left(\frac{k}{n+\frac{1}{2}}\right)\right)\\
&\geq \exp\left(\mu k \left(\frac{k-n -
\frac{1}{2}}{n+\frac{1}{2}}\right)\log 2\right)\\
&\geq \exp\left( \mu \left( k-n-\tfrac{1}{2}\right)\log 2\right)\\
& = 2^{\mu ( k-n-\frac{1}{2})}.
\end{aligned}
\end{equation*}
With $l=k-n$ we obtain
\begin{equation}\label{4g}
\Sigma_{2,n}\leq \sum_{l=1}^{n}\frac{1}{2^{\mu(l-\frac{1}{2})}-1}+
\sum_{k=2n+1}^{\infty}\frac{1}{2^{\mu k}-1} \leq C.
\end{equation}
Combining (\ref{4f})  with (\ref{4g})  we  find that
\[
|g(z)| \leq 4C  \quad \mbox{for} \quad |z|=\left(n
+\tfrac{1}{2}\right)^\mu.
\]
Let now $z\in W_2$, say
$z=re^{i\pi (2m-1)/2n}$ where $\left(n-\frac{1}{2}\right)^\mu
\leq  r \leq  \left(n+\frac{1}{2}\right)^\mu$ and $1\leq m \leq 2n$.
Then $z^{2n}=-r^{2n}$ and hence
\[
\left|\frac{n^{\mu n}r^n}{ z^{2n}-n^{2\mu n}}\right|
=
\frac{n^{\mu n}r^n}{ r^{2n}+n^{2\mu n}}.
\]
Similar estimates as above now yield
\[ \begin{aligned}
\frac{1}{2}|g(z)|& \leq \sum_{k=1}^{n-1}  \frac{k^{\mu k}r^k}{
r^{2k}-k^{2\mu k}}+  \frac{n^{\mu n}r^n}{ r^{2n}+ n^{2\mu k}}+
\sum_{k=n+1}^{\infty} \frac{k^{\mu k}r^k}{
k^{2\mu k}- r^{2k}}\\
&\leq  \sum_{k=1}^n  \frac{k^{\mu k}}{( n-\frac{1}{2})^{\mu
k}-k^{\mu k}} + 2 + \sum_{k=n+1}^\infty
\frac{(n+\frac{1}{2})^{\mu k}}{k^{\mu k}-(n+\frac{1}{2})^{k} }\\
& = \Sigma_{1,n-1} +2+  \Sigma_{2,n}\\
&\leq 2C +2.
\end{aligned}
\]
We obtain
\begin{equation}\label{4h}
|g(z)|\leq 4C +4 \quad \mbox{for}\quad z\in W.
\end{equation}
Next we want  to show that $g$  is actually bounded on a larger set.
To do this we  note that
\begin{equation}\label{4h1}
\left(n+\frac{1}{2}\right)^\mu- n^\mu \sim n^\mu -
\left(n-\frac{1}{2}\right)^\mu  \sim  \frac{\mu}{2} n^{\mu -1}
\end{equation}
and
\begin{equation}\label{4h2}
|u_{n,m}-u_{n, m+1}|=n^\mu|e^{i\pi /2n}-1|\sim\frac{\pi}{2}
n^{\mu-1}
\end{equation}
as $n \to \infty$. It follows that there exists  $\eta >0$ such that
if $W_{n,m}$ denotes the component  of $\C \sms W$  that contains
$u_{n,m}$, then
\[
\dist(u_{n,m}, \partial{W_{n,m}})\geq 2 \eta n ^{\mu-1}
\]
for all $n \in \N$  and $m \in \{0, 1, \ldots, 2n-1\}$. The function
\[
h(z)=g(z)- \frac{v_{n,m}}{z-u_{n,m}}
\]
is holomorphic  in the closure of $W_{n,m}$ and  for $z \in
\partial{W_{n,m}}$ we have
\begin{equation*}\label{4i}
|h(z)|
\leq |g(z)| + \frac{v_{n,m}}{|z-u_{n,m}|}
\leq  4C+4 + \frac{n^{\mu-1}}{2 \eta n^{\mu-1}}
= 4C+ 4 +\frac{1}{2\eta}.
\end{equation*}
By  the maximum principle,
\begin{equation*}\label{4j}
|h(z)|\leq 4 C+ 4+ \frac{1}{2\eta}\quad \mbox{for} \quad  z \in
W_{n,m}.
\end{equation*}
We put $r_n=\eta n^{\mu-1}$ and  deduce  that if $z\in W_{n,m}\sms
D(u_{n,m},r_n)$, then
\[
|g(z)|\leq |h(z)|+ \frac{|v_{n,m}|}{r_n}\leq 4C + 4 +
\frac{3}{2\eta}.
\]
In order  to show that $g \in \mathcal  B$ we note  that if $z \in
\partial{W_{n,m}}$, then
\begin{equation*}
|g'(z)|
= \frac{1}{2\pi}\left|\;
\int\limits_{|\zeta-z|=r_n}\frac{g(\zeta)}{(\zeta-z)^2}d\zeta \right|
\leq \frac{1}{r_n} \max_{|\zeta-z|=r_n}|g(z)|
\leq \frac{1}{r_n}\left( 4C+ 4+ \frac{3}{2\eta}\right).
\end{equation*}
This implies  that if $z\in \partial{W_{n,m}}$, then
\[
|h'(z)|\leq |g'(z)|+ \frac{|v_{n.m}|}{|z-u_{n,m}|^2}\leq
\frac{1}{r_n}\left( 4C+ 4+ \frac{5}{2\eta}\right).
\]
Again we have
\[
|h'(z)|\leq \frac{1}{r_n}\left( 4C+ 4+ \frac{5}{2\eta}\right) \quad
\mbox{for} \quad z\in W_{n,m}
\]
by the maximum principle. We deduce  that if $\d >0$ is chosen sufficiently
small and $z\in D(u_{n,m},\d r_n)$, then
\[
|g'(z)|
\geq \frac{|v_{n,m}|}{|z-u_{n,m}|^2}-|h'(z)|
\geq
\frac{1}{r_n}\left(\frac{1}{\d^2 \eta}- 4 C -4
-\frac{5}{2\eta}\right)
>0.
\]
It follows  that if $ g'(z)=0$ for some  $z\in W_{n,m}$, then
$|z-u_{n,m}| \geq \d r_n$ and thus
\[
|g(z)| \leq |h(z)| + \frac{|v_{n,m}|}{|z-u_{n,m}|} \leq 4 C +4 +
\frac{1}{2\eta} +\frac{1}{\d \eta}.
\]
This implies  that the set of critical  values of $g$ is   bounded.
By (\ref{4h}) the same is  true for  the set of asymptotic values of
$g$. Hence $g \in \mathcal B$.

To compute the order of $g$ we note that the number $n(r,g)$ of
poles of $g$ in $\overline{D(0,r)}$ satisfies
\[
n(r,g)=\sum_{j=1}^{\left[r^{1/\mu}\right]} 2k
\sim \int\limits_{0}^{r^{1/\mu}} 2t\; dt=r^{2/\mu}=r^\rho
\]
as $r\to\infty$. Thus
\[
N(r,g)=\int\limits_0^r \frac{n(t,g)}{t} dt \sim \frac{1}{\rho} r^\rho.
\]
By (\ref{4h}) we have $m(r,g)\leq 4C+4$ if $r$ has the form
$r=\left(k+\frac12\right)^\mu$ for some $k\in\N$.
It follows that
\begin{equation}\label{4j1}
T(r,g)=N(r,g)+m(r,g)\sim \frac{1}{\rho} r^\rho
\end{equation}
as $r\to\infty$ through $r$-values of the form
$r=\left(k+\frac12\right)^\mu$.
But since $T(r,g)$ is an increasing function of~$r$,
the relation~(\ref{4j1}) actually holds for all values of~$r$.
Hence $g$ has order~$\rho$.

\

We now put
\[
f(z)=g(z)^M.
\]
It follows  that $f \in
\mathcal B$ and that $f$ has order~$\rho$.

\
\section{Proof of Theorem~\ref{thm2}}\label{proofthm2}
Let $f$ be the function constructed in section~\ref{construction}.
As in section~\ref{lowerbounds} we denote the sequence of poles
by $(a_j)$, ordered such that $|a_j|\leq |a_{j+1}|$ for all
$j\in\N$.
For each $j\in\N$ we thus have $a_j=u_{n,m}$ for some $n\in\N$
and $0\leq m\leq 2n-1$. It is not difficult to
see that $n\sim j^2$ as $j\to\infty$ if $a_j=u_{n,m}$.
Hence $|a_j|=|u_{n,m}|=n^\mu\sim j^{2\mu}= j^{1/\rho}$ as $j\to\infty$. 
Choose $b_j$ as in
section~\ref{proofthm1} so that
\[
f(z)\sim\left(\frac{b_j}{z-a_j}\right)^{M} \quad \mbox{as}\quad
z\to a_j.
\]
Then  $b_j=v_{n,m}$ if  $a_j=u_{n,m}$ and hence
\begin{equation}\label{4y1}
|b_j|=|a_j|^{1-\rho/2}
\end{equation}
by~\eqref{4e}.
Choose $R>0$ large and let $E_l$ be as in section~\ref{proofthm1}.
Thus $E_l$ consists of all components $V$ of $f^{-l}(B(R))$ for
which $f^k(V)\subset B(R)$ for $0\leq k\leq l-1$.
Clearly
$E=\bigcap_{l=1}^\infty\overline{E}_l \subset I_R(f)$.

We deduce from~\eqref{3i} that if $V\in E_l$ and
$f^k(V)\subset U_{j_{k+1}}$ for $0\leq k\leq l-1$, then
\[
\diam_{\chi} \left(V\right)
\leq
(2^{1/M} 24)^{l-1} \frac{32}{R^{1/M}} \prod_{k=1}^l
\frac{|b_{j_k}|}{|a_{j_k}|^{1+1/M}}.
\]
By~\eqref{4y1} we have
\[
\frac{|b_{j_k}|}{|a_{j_k}|^{1+1/M}}=\frac{1}{|a_{j_k}|^{\rho/2+1/M}}
\]
and since $|a_{j_k}|>R$ we obtain
\[
\diam_{\chi} \left(V\right)
\leq \left( \frac{A}{R^{\rho/2+1/M}}\right)^l
\]
for some constant $A>0$ if $V\in E_l$.
Thus we can apply Lemma~\ref{lemmamcm} with
\begin{equation}\label{7a}
d_l=\left( \frac{A}{R^{\rho/2+1/M}}\right)^M.
\end{equation}

In order to estimate $\Delta_l$ we note that
\[
W_{n,m}\subset D\left(u_{n,m},\left(\frac{\mu}{2}+\frac{\pi}{4}\right) n^{\mu-1}\right)
\]
for large $n$ by~\eqref{4h1} and~\eqref{4h2}. By~\eqref{4e} we have
\[
|v_{n,m}|=|u_{n,m}|^{1-\rho/2}=n^{\mu(1-\rho/2)}=n^{\mu-1}.
\]
With
%$\tau=\frac{\mu}{2}+\frac{\pi}{4}$ 
$\tau=\mu/2+\pi/4$ 
we see that
$W_{n,m}\subset D\left(u_{n,m},\tau|v_{n,m}|\right)$
if $n$ is large. Thus
\[
W_{n,m}\subset D\left(a_j,\tau|b_j|\right)
\]
if  $a_j=u_{n,m}$.
On the other hand, it follows from~\eqref{3a} and~\eqref{4h} that
\[
D\left(a_j,\frac{1}{4 R^{1/M}}|b_j|\right)\subset U_j=W_{n,m}\cap f^{-1}(B(R)),
\]
provided $R$ is large enough. We conclude that
\[
\dens\left( f^{-1}(B(R)),W_{n,m}\right)\geq \frac{1}{16\tau^2 R^{2/M}}.
\]
Since $W_{n,m}\subset \left\{ z\in\C: \left(n-\frac12\right)^\mu
\leq |z|\leq  \left(n+\frac12\right)^\mu\right\}$ and since
$\left(n+\frac12\right)^\mu/\left(n-\frac12\right)^\mu\to 1$
as $n\to\infty$ this implies that
if $S\geq R$ and
\[A(S)=\left\{ z\in\C: S< |z|<  2S\right\},
\]
then
\[
\dens\left( \overline{E}_1,A(S)\right)\geq \frac{1}{17\tau^2 R^{2/M}}.
\]
We now consider a branch $g_j$ of  $f^{-1}$ which maps
$A'(S)=A(S)\setminus (-2S,-S)$ into $U_j$.
Recall that $g_j$ has the form~\eqref{3g}.
It follows from~\eqref{3g1} that
\[
|g_j'(z)|\leq \frac{12|b_j|}{M S^{1+1/M}}
\]
for $z\in A'(S)$. The argument to obtain~\eqref{3g1} also shows that
\[
|g_j'(z)|
\geq \frac{4|b_j|}{27 M |z|^{1+1/M}}
\geq \frac{4|b_j|}{27 M (2S)^{1+1/M}}
\]
for $z\in A'(S)$.
With $K=2^{1+1/M}81$ we obtain
\[
\sup_{u,v\in A'(S)}\left|\frac{g_j'(u)}{g_j'(v)}\right|\leq K,
\]
provided $S$ is large enough.
We deduce that
\[
\dens\left( g_j\left(\overline{E}_1\right),
g_j(A'(S))\right)\geq
\frac{1}{K^2}\dens\left( \overline{E}_1,
A'(S)\right)\geq
\frac{1}{17K^2\tau^2 R^{2/M}}.
\]
Applying this for all $S$ for the form $S=2^k R$ with $k\geq 0$ and
for all branches $g_j$ mapping to $U_j$ we deduce that
\begin{equation}\label{7b}
\dens\left( \overline{E}_2, U_j\right)\geq
\frac{1}{17K^2\tau^2 R^{2/M}}
\end{equation}
for each element $U_j$ of $E_1$.
Let now $V\in E_l$ and $j_1,j_2,\ldots,j_k$ such that
$f^k(V)\subset U_{j_{k+1}}$ for $0\leq k\leq l-1$. Then
$f^{l-1}(V)= U_{j_{l}}$ and
\begin{equation}\label{7c}
f^{l-1}\left( \overline{E}_{l+1}\cap V\right)= \overline{E}_2\cap U_{j_{l}}.
\end{equation}
For large $R$ a branch of $f^{-1}$ that maps $U_{j_l}$ into
$U_{j_{l-1}}$ extends univalently to $D\left(a_{j_{l}},\frac34 a_{j_{l}}\right)$
and it maps $D\left(a_{j_{l}},\frac34 a_{j_{l}}\right)$ into $B(R)$. Thus
the branch of the inverse of $f^{l-1}$ which maps $U_{j_{l}}$ to $V$
extends univalently do $D\left(a_{j_{l}},\frac34 a_{j_{l}}\right)$.
Since $U_{j_{l}}\subset D\left(a_{j_{l}},\frac12 a_{j_{l}}\right)$
by~\eqref{3e}, we can now deduce from~\eqref{7b}, \eqref{7c} and
Koebe's distortion theorem~\eqref{2b} with $\lambda=\frac34$ that
\[
\dens\left( \overline{E}_{l+1}, V\right)
\geq
\left(\frac{1-\lambda}{1+\lambda}\right)^4
\dens\left( \overline{E}_2, U_{j_l}\right)
\geq
\left(\frac{1-\lambda}{1+\lambda}\right)^4
\frac{1}{17K^2\tau^2 R^{2/M}}.
\]
Since $U_{j_{l}}\subset D\left(a_{j_{l}},\frac12 a_{j_{l}}\right)$
we conclude using~\eqref{4n} that there exists a constant $B>0$ such that
\[
\dens_{\chi}\left( \overline{E}_{l+1}, V\right)
\geq \frac{B}{R^{2/M}}.
\]
Hence Lemma~\ref{lemmamcm} can be applied with
\begin{equation}\label{7d}
\Delta_l=\frac{B}{R^{2/M}}.
\end{equation}
Using the values for $d_l$ and $\Delta_l$ given by~\eqref{7a}
and~\eqref{7d} we find that
\[
\HD(E)\geq 2-\limsup_{l\to\infty}
\frac{(l+1)\left(\log B - \frac{2}{M}\log R\right)}{l\left(
\log A-\left(\frac{\rho}{2}+\frac{1}{M}\right)\log R\right)}
=2-
\frac{\log B - \frac{2}{M}\log R}{\log A-
\left(\frac{\rho}{2}+\frac{1}{M}\right)\log R}.
\]
Since $E\subset I_R(f)$ and thus $\HD\left(I_R(f)\right)\geq \HD(E)$
and since $\HD\left(I_R(f)\right)$ is an non-increasing function of $R$,
we obtain
\[
\HD\left(I_R(f)\right)
\geq 2-\limsup_{R\to\infty}
\frac{\log B - \frac{2}{M}\log R}{\log A-
\left(\frac{\rho}{2}+\frac{1}{M}\right)\log R}
=
2-\frac{\frac{2}{M}}{\frac{\rho}{2}+\frac{1}{M}}
=\frac{2M\rho}{2+M\rho}.
\]
Thus we have proved~\eqref{1c1}.

\

In order to prove~\eqref{1c0}  we choose a non-decreasing
sequence $(R_l)$ which tends to~$\infty$.
We define $E_l$ as the set of all components of $f^{-l}(B(R_l))$ for
which $f^k(V)\subset B(R_{l-k})$ for $0\leq k\leq l-1$.
Then $E=\bigcap_{l=1}^\infty\overline{E}_l \subset I(f)$. The same considerations as
before now yield that we can apply Lemma~\ref{lemmamcm} with
\[
d_l=A^l \prod_{k=1}^l \frac{1}{R_k^{\rho/2+1/M}}
\]
and
\[
\Delta_l=\frac{B}{R_l^{2/M}}.
\]
We obtain
\[
\HD(E)\geq 2-\limsup_{l\to\infty}
\frac{(l+1)\log B - \frac{2}{M}\sum_{k=1}^{l+1}\log R_k}{l
\log A-\left(\frac{\rho}{2}+\frac{1}{M}\right)\sum_{k=1}^{l}\log R_k}.
\]
Choosing a sequence $(R_l)$ which does not tend to infinity too fast,
for example $R_k=k$ for large $k$, we deduce that
\[
\HD\left(I(f)\right)
\geq 2-\frac{\frac{2}{M}}{\frac{\rho}{2}+\frac{1}{M}}
=\frac{2M\rho}{2+M\rho}.
\]
The opposite inequality follows from Theorem~\ref{thm1}.
Thus we have proved~\eqref{1c0}.

\

To prove~\eqref{1c} we will now
apply  Lemma~\ref{4.1} and the remarks following it.
Let $a$ be a pole  of $f$ which has large  modulus.
Thus $a=u_{n,m}$ where $n$ is large and $0 \leq  m \leq 2n-1$. It
follows from the consideration in section~3  that if $a$ is large
enough, if $D$  is a sufficiently  small neighbourhood of $a$ and if
$z\in f^{-1}(D), $  then   $z$ is  in a  small  neighbourhood  of
one of the poles $u_{k,l}$. In particular, we can achieve  that
\begin{equation}\label{4k}
\frac{1}{2}\frac{|v_{k,l}|^M}{|z-u_{k,l}|^M}\leq |f(z)| \leq 2 |a|
\end{equation}
and
\begin{equation}\label{4l}
|f'(z)| \leq 2 M \frac{|v_{k,l}|^M}{|z-u_{k,l}|^{M+1}}
\end{equation}
for   some $k\in \N$ and $0\leq  l \leq 2k-1$, if $|z|$  is
sufficiently  large. Combining (\ref{4k}) and  (\ref{4e})  we see
that
\begin{equation}\label{4l1}
|z-u_{k,l}|\geq \left(\frac{1}{4|a|}\right)^{1/M}|v_{k,l}|=
\left(\frac{1}{4|a|}\right)^{1/M}|u_{k,l}|^{1-\rho/2}.
\end{equation}
Now (\ref{4k}), (\ref{4l}) and (\ref{4l1})  yield
\[
|f'(z)|
\leq 2 M
\frac{|v_{k,l}|^M}{|z-u_{k,l}|^M}\frac{1}{|z-u_{k,l}|}
\leq  \frac{8 M |a|}{|z-u_{k,l}|}
\leq 8 M|a|( 4|a|)^{1/M}|u_{k,l}|^{\rho/2-1},
\]
and as $z$  is in a  small neighbourhood of $u_{k,l}$ we obtain
\[|f'(z)|\leq K |z|^{\rho/2-1}
\]
for some constant $K$. We can  thus apply Mayer's result with
$\a=\rho/2-1$
and hence (\ref{4a})  yields
\[
\HD(J(f))\geq \frac{\rho}{\frac{\rho}{2}+\frac{1}{M}}= \frac{ 2 M\rho}{2+M \rho}.
\]
Thus we have again obtained~\eqref{1c1}.

However, from~\eqref{4j1} and the definition of $f$ we deduce that
\[
T(r,f)\sim \frac{M}{\rho}r^\rho
\]
as $r\to\infty$. This implies that
the series (\ref{4b}) diverges for all $a\in\C$ with at most two exceptions.
Hence (\ref{1c})  follows from (\ref{4c}).

\

\section{Proof of Theorem~\ref{thm3}}
Suppose that $\area\left(I_R(f)\right)>0$.
Putting $I_R'=\{z\in\C:|f^k(z)|>R\text{ for all }k\geq 0\}$ we
have $\area(I_R')>0$. We shall show that this leads to a
contradiction if $R$ is sufficiently large.

We use the notation of section~\ref{proofthm1} and,
in addition,
denote by $U_j^0$ the component of $B(R_0)$ that
contains $a_j$. Then $U_j^0\cap U_k^0=\emptyset$
for $j\neq k$. In particular, $U_j^0\cap U_k=\emptyset$
for $j\neq k$ if $R\geq R_0$.
It follows that $I_R'\cap U_j^0\subset U_j$.
By~\eqref{3a} we have
\[
U_j^0\supset D\left(
a_j,\frac{1}{4 R_0^{1/M}} |b_j| \right)
\]
while~\eqref{3c} yields
that
\[
U_j\subset  D\left( a_j,\frac{2}{ R^{1/M}} |b_j| \right).
\]

Let now  $\xi$ be a density point of $I_R'$ and put $w_l=f^l(\xi)$
for $l\in\N$. Then $w_l\in U_{j_l}$ for some $j_l\in\N$.
Since $|w_l-a_{j_l}|\leq 2R^{-1/M}$ we have
\[
D\left(w_l,\frac{1}{5 R_0^{1/M}} |b_{j_l}| \right)
\subset
D\left(a_{j_l},\frac{1}{4 R_0^{1/M}} |b_{j_l}| \right)
\]
for large $R$.
Thus
\[
I_R'\cap D\left(w_l,\frac{1}{5 R_0^{1/M}} |b_{j_l}| \right)
\subset D\left( a_{j_l},\frac{2}{R^{1/M}} |b_{j_l}| \right)
\]
which implies that
\[
\dens\left(I_R',D\left(w_l,\frac{1}{5 R_0^{1/M}} |b_{j_l}| \right)\right)
\leq
100  \left(\frac{R_0}{R}\right)^{2/M}
\]
Similarly as in section~\ref{proofthm2} we see that if $R_0$
is chosen large enough and if $\phi$ denotes
the branch of the inverse function of $f^l$ which maps
$w_l$ to $\xi$, then $\phi$ has an analytic continuation
to $D(w_l,\frac{2}{5}R_0^{-1/M}|b_{j_l}|)$.
Applying Koebe's distortion theorem with $\lambda=\frac12$
we conclude that
\[
\dens\left(I_R',\phi\left(D\left(w_l,\frac{1}{5 R_0^{1/M}} |b_{j_l}|
\right)\right)\right)
\leq
8100  \left(\frac{R_0}{R}\right)^{2/M}.
\]
Koebe's theorem also yields that
\[
D\left(\xi,\frac{1}{20 R_0^{1/M}} |b_{j_l}\phi'(w_l)| \right)
\subset
\phi\left(D\left(w_l,\frac{1}{5 R_0^{1/M}} |b_{j_l}| \right)\right)
\subset
D\left(\xi,\frac{2}{5 R_0^{1/M}} |b_{j_l}\phi'(w_l)|\right).
\]
With $r_l=\frac{2}{5} R_0^{-1/M} |b_{j_l}\phi'(w_l)|$ we conclude
that
\begin{equation}\label{7f}
\dens\left(I_R',\left(D\left(\xi,r_l \right)\right)\right)
\leq
64\cdot 8100  \left(\frac{R_0}{R}\right)^{2/M}.
\end{equation}
Also, it is not difficult to see that $r_l\to 0$ as $l\to\infty$.
If $R$ is so large that the right hand side of~\eqref{7f}
is less than~$1$,
we obtain a contradiction to the
assumption that $\xi$ is a point of density.\qed

\

We note that the argument shows that in fact the set
of all $z\in\C$ for which
\[
\limsup_{k\to\infty}|f^k(z)|>R
\]
has area zero for large~$R$.

\section{Proof of Theorem~\ref{thm4}}

We want to  construct a  function $f \in \mathcal B$ for which
$\infty$ is not an  asymptotic value, the multiplicity  of the poles
is unbounded and $ \area(I(f))>0$.

\

We begin by choosing a sequence of discs $D(a_j, r_j)$
of radius less than $1$ which are contained
in $\left\{z\in {\mathbb C}:  |z| > 2\right\}$  such that
the complement
\[
A={\mathbb C}\sms\bigcup_{j=1}^\infty D(a_j, r_j)
\]
is small in a certain sense. More specifically, we choose
the $D(a_j, r_j)$ such that with
\[
P_n=\left\{z\in {\mathbb C}: 2^{n}\leq |z|< 2^{n+1}\right\}.
\]
the following properties are satisfied:
\[
\ov{D(a_j, r_j)}\cap \ov{ D(a_k, r_k)}=\es \quad \mbox{for}\ j,k\in\N,
j\neq k,
\]
\[
I_n:= \left\{ j\in\N: P_n\cap D\left(a_j,r_j\right)\neq\emptyset\right\}
\  \mbox{is finite for}\ n\in\N
\]
and
\[
\area\left(A\cap P_n\right) < 1 \quad \mbox{for}\ n\in\N.
\]
It is clear that it is possible to choose a sequence
of disks with these properties.

Next we choose a sequence $(r_k')$ satisfying
$0<r_k'<r_k$ for all $k\in\N$ such that with
\[
A'={\mathbb C}\sms\bigcup_{j=1}^\infty D(a_j, r_j')
\]
we have
\begin{equation}\label{5a}
\area\left(A'\cap P_n\right) < 2
\end{equation}
for all $n\in\N$.
For $k\in\N$ we put
\[
 d_k=\min_{j\neq k}  \dist(a_k, D(a_j,r_j)).
\]
Note that $d_k>r_k$ for all $k\in\N$.
We also choose a sequence $(\eps_k)$ of  positive real numbers
such that
\begin{eqnarray}\label{5e}
\sum_{k=1}^{\infty}\eps_k  <
\frac{1}{2}.
\end{eqnarray}
Finally we choose
a sequence $(m_k)$ of positive integers
such that
\begin{equation}\label{5f}
\frac{\eps_k m_k}{r_k}> 2,
\end{equation}
\begin{equation}\label{5g}
\eps_k\left(\frac{r_k}{r_k'}\right)^{m_k}> 3
\end{equation}
and
\begin{equation}\label{5h}
\frac{ m_k}{d_k}\left(\frac{r_k}{d_k}\right)^{m_k}\leq 1
\end{equation}
for all $k\in\N$ and
\begin{equation}\label{5i}
\sum_{k\in I_n}
\frac{r_k^2}{m_k}\leq \frac{3}{32}
\end{equation}
for all $n\in\N$.
The function $ f:{\mathbb C}\to \widehat{\mathbb C}$ is now
defined by
\[
f(z)=\sum_{k=1}^{\infty}\eps_k
\left(\frac{r_k}{z-a_k}\right)^{m_k},
\]

\

\begin{la}\label{5.1}
The function $f$ is in ${\mathcal B}$ and
$\infty$ is not an asymptotic value  of $f$.
\end{la}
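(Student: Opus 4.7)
I would prove the lemma in three steps, reserving the analysis of critical points for last.

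\emph{Step 1: meromorphicity and boundedness on $A$.}
For $z\in A$ one has $|z-a_k|\geq r_k$ for every $k$, so each summand of the series defining $f$ satisfies $|\eps_k(r_k/(z-a_k))^{m_k}|\leq\eps_k$, and \eqref{5e} yields
\[
|f(z)|\leq\sum_{k=1}^\infty\eps_k<\frac{1}{2}\qquad\text{for }z\in A.
\]
On a fixed disc $D(a_k,r_k)$ I would separate off the singular term $\phi_k(z):=\eps_k(r_k/(z-a_k))^{m_k}$ and write $f=\phi_k+g_k$ with $g_k:=\sum_{j\neq k}\phi_j$. Since the closed discs are disjoint one has $|z-a_j|\geq r_j$ for $j\neq k$, so the series for $g_k$ is dominated by $\sum_{j\neq k}\eps_j$ and converges uniformly on $D(a_k,r_k)$. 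Thus $f$ is meromorphic on $\C$ with poles exactly at the $a_k$ of the prescribed multiplicity.

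\emph{Step 2: $\infty$ is not an asymptotic value, and finite asymptotic values have modulus at most $1/2$.}
For any $R>1/2$ the estimate of Step~1 shows that $f^{-1}(B(R))$ is disjoint from $A$ and hence contained in the disjoint union $\bigcup_k D(a_k,r_k)$; every component is therefore contained in a single bounded disc. If $\gamma\colon[0,\infty)\to\C$ is a curve with $\gamma(t)\to\infty$ and $f(\gamma(t))\to c$ with $|c|>1/2$, then for large $t$ the connected tail $\gamma([t,\infty))$ lies entirely in $\{|f|>1/2\}$ and therefore in a single $D(a_k,r_k)$, contradicting $\gamma(t)\to\infty$. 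So $c=\infty$ is impossible and every finite asymptotic value satisfies $|c|\leq 1/2$.

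\emph{Step 3: critical values are bounded.}
Any critical point in $A$ produces a critical value of modulus $\leq 1/2$, so it suffices to rule out critical points inside any $D(a_k,r_k)\sms\{a_k\}$. On such a disc, \eqref{5f} gives the lower bound
\[
|\phi_k'(z)|=\frac{m_k\eps_k r_k^{m_k}}{|z-a_k|^{m_k+1}}\geq\frac{m_k\eps_k}{r_k}>2.
\]
For the matching upper bound on $|g_k'|$, the crucial observation is that for $j\neq k$ the definition of $d_j$ applied to the neighbour $a_k$ gives $|a_k-a_j|\geq d_j+r_k$, so $|z-a_j|>d_j$ whenever $z\in D(a_k,r_k)$. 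Combining this with \eqref{5h} rewritten as $m_j r_j^{m_j}\leq d_j^{m_j+1}$ yields, for each $j\neq k$,
\[
|\phi_j'(z)|=\frac{m_j\eps_j r_j^{m_j}}{|z-a_j|^{m_j+1}}<\frac{m_j\eps_j r_j^{m_j}}{d_j^{m_j+1}}\leq\eps_j,
\]
so $|g_k'(z)|<\sum_{j\neq k}\eps_j<1/2$ by \eqref{5e}. Hence $|f'(z)|\geq|\phi_k'(z)|-|g_k'(z)|>3/2$ on $D(a_k,r_k)\sms\{a_k\}$, and $f$ has no critical point there. Combined with Step~2 this shows $\Sing(f^{-1})\sbt\ov{D(0,1/2)}$, proving $f\in\mathcal{B}$.

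\textbf{Main obstacle.} The delicate point is the correct use of \eqref{5h}: the naive estimate $|z-a_j|\geq d_k+r_j-r_k$ obtained from $d_k$ is too weak, and one must recognise that the relevant symmetric bound is $|z-a_j|>d_j$ (coming from $d_j$, applied at the neighbour $a_k$), because \eqref{5h} is precisely tailored so that this bound turns each $|\phi_j'(z)|$ into a summand controllable by \eqref{5e}. Once this symmetry is spotted, the rest of the argument is routine.
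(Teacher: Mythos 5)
Your argument is correct and follows essentially the same strategy as the paper: bound $|f|$ on $A$, show that the critical points are confined to $A$ by splitting off the dominant term near each pole and controlling the remainder via $|z-a_j|>d_j$ together with \eqref{5h} and \eqref{5e}, and conclude that $\Sing(f^{-1})$ is bounded and $\infty$ is not asymptotic. The only difference is cosmetic: you spell out the asymptotic-value argument via curves tending to $\infty$, which the paper leaves implicit, and you get the slightly sharper bound $|f'|>3/2$ rather than the paper's $|f'|>1$ (the paper deliberately retains the factor $(r_k/|z-a_k|)^{m_k}$ in \eqref{5o1} for later use in Lemma~\ref{5.2}, but that is not needed for the present lemma).
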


\begin{proof}
The derivative of $f$ is  given by
\[
f'(z)=- \sum_{k=1}^\infty
\frac{\eps_km_k}{z-a_k}\left(\frac{r_k}{z-a_k}\right)^{m_j}.
\]
For $z\in D(a_k, r_k)\sms\{a_k\}$ we thus  have
\[
|f'(z)|\geq
\frac{\eps_km_k}{|z-a_k|}\left(\frac{r_k}{|z-a_k|}\right)^{m_k}-
\sum_{\substack{j=0 \\ j\neq k}}^\infty \frac{\eps_j
m_j}{|z-a_j|}\left(\frac{r_j}{|z-a_j|}\right)^{m_j}
\]
and hence, using the definition of $d_j$,
\begin{equation}\label{5m}
|f'(z)|\geq
\frac{\eps_km_k}{r_k}\left(\frac{r_k}{|z-a_k|}\right)^{m_k}-
 \sum_{\substack{j=0 \\ j\neq k}}^\infty \frac{\eps_j
m_j}{d_j}\left(\frac{r_j}{d_j}\right)^{m_j}.
\end{equation}
For $z \in D(a_k, r_k)$ we have $|z-a_k|< r_k$ and thus~(\ref{5f}) yields
\begin{equation}\label{5n}
\frac{\eps_km_k}{r_k}\left(\frac{r_k}{|z-a_k|}\right)^{m_k}\geq
\frac{\eps_km_k}{r_k}>  2.
\end{equation}
On the other hand, applying  (\ref{5h}) and (\ref{5e})  we obtain
\begin{equation}\label{5o}
\sum_{\substack{j=0 \\ j\neq k}}^\infty \frac{\eps_j
m_j}{d_j}\left(\frac{r_j}{d_j}\right)^{m_j} \leq \sum_{j=1}^\infty
\eps_j < 1.
\end{equation}
It follows from (\ref{5m}), (\ref{5n}) and (\ref{5o}) that
\begin{equation}\label{5o1}
|f'(z)|\geq
\frac{1}{2}\frac{\eps_km_k}{r_k}\left(\frac{r_k}{|z-a_k|}\right)^{m_k}
>1
\end{equation}
for all $z\in D(a_k, r_k)\sms \{a_k\}$. The last   inequality implies
that all critical  points of $f$  are contained in $A$.
Since
\begin{equation}\label{5p1}
|f(z)|\leq \sum_{k=1}^\infty \eps_k< \frac12
\quad \mbox{for}\quad z\in A,
\end{equation}
all critical and asymptotic values
of $f$  are contained in $D\left(0,\frac12\right)$.
Hence $f \in \mathcal B$ and  $\infty$ is not an asymptotic value of $f$.
\end{proof}
We also note that
$D(0,2)\sbt A$ so that (\ref{5p1}) yields
that $f(D(0,2)) \sbt D\left(0,\frac12\right)$.
Hence $D(0,2)$ contains
an attracting fixed point and  all singular values are contained in
its basin of attraction.

\begin{la}\label{5.1a}
If $z\in\bigcup_{j=1}^\infty D(a_j,r_j')=\C\setminus A'$, then
$|f(z)|>2$.
\end{la}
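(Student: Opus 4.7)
\textbf{Proof plan for Lemma~\ref{5.1a}.} Fix $k\in\N$ and $z\in D(a_k, r_k')$; it suffices to show $|f(z)|>2$. The plan is a standard dominant-term estimate: isolate the $k$-th summand in the series defining $f$ and show that its modulus exceeds $3$, while the sum of the remaining terms has modulus less than $\frac12$. Then the triangle inequality yields $|f(z)|>3-\frac12>2$.

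For the dominant term, since $|z-a_k|<r_k'$, I would apply the lower bound
\[
\left|\eps_k\left(\frac{r_k}{z-a_k}\right)^{m_k}\right|
\geq \eps_k\left(\frac{r_k}{r_k'}\right)^{m_k} > 3
\]
which is exactly what condition~\eqref{5g} supplies.

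For the remaining terms I need, for each $j\neq k$, a lower bound on $|z-a_j|$ that is independent of $z\in D(a_k,r_k)$. The key geometric observation, already used implicitly in the proof of Lemma~\ref{5.1}, is that
\[
d_j = \min_{l\neq j}\dist(a_j,D(a_l,r_l)) \leq \dist(a_j,D(a_k,r_k)) \leq |z-a_j|
\]
whenever $z\in D(a_k,r_k)$ and $j\neq k$. Since the closures $\ov{D(a_k,r_k)}$ are pairwise disjoint, one also has $d_j > r_j$, hence $r_j/d_j < 1$ and therefore $(r_j/d_j)^{m_j}\leq 1$. Combining this with~\eqref{5e} gives
\[
\left|\sum_{j\neq k}\eps_j\left(\frac{r_j}{z-a_j}\right)^{m_j}\right|
\leq \sum_{j\neq k}\eps_j\left(\frac{r_j}{d_j}\right)^{m_j}
\leq \sum_{j=1}^\infty \eps_j < \frac12.
\]

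Putting the two estimates together,
\[
|f(z)| \geq \left|\eps_k\left(\frac{r_k}{z-a_k}\right)^{m_k}\right|
-\left|\sum_{j\neq k}\eps_j\left(\frac{r_j}{z-a_j}\right)^{m_j}\right|
> 3 - \frac12 > 2,
\]
which is the desired conclusion. There is no real obstacle here; the only subtlety is to verify cleanly that $|z-a_j|\geq d_j$ for $z\in D(a_k,r_k)$, $j\neq k$, and that $r_j<d_j$ (both of which follow from the disjointness of the closed disks and the definition of $d_j$), so that the remainder is controlled purely by the summable sequence $(\eps_k)$ from~\eqref{5e}, with no need to invoke~\eqref{5f},~\eqref{5h}, or~\eqref{5i}.
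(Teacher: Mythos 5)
Your proof is correct and takes essentially the same approach as the paper's: isolate the $k$-th term (bounded below by~\eqref{5g}), bound the remainder by $\sum_j\eps_j<\frac12$ using~\eqref{5e} together with $|z-a_j|\geq d_j>r_j$, and apply the triangle inequality. The paper compresses the remainder estimate into the phrase ``proceeding as in the proof of Lemma~\ref{5.1}''; you have simply written out that geometric step explicitly.
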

\begin{proof}
Let $z \in D(a_k, r_k')$. Proceeding as in
the proof of Lemma~\ref{5.1},  we obtain
\[
|f(z)|
\geq
\eps_k\left(\frac{r_k}{|z-a_k|}\right)^{m_k}-
\sum_{\substack{j=0 \\ j\neq k}}^\infty
\eps_j \left(\frac{r_j}{|z-a_j|}\right)^{m_k}
\geq
\eps_k\left(\frac{r_k}{r_k'}\right)^{m_k}-
\sum_{j=0}^\infty\eps_j .
\]
The conclusion now follows from~\eqref{5e} and (\ref{5g}).
\end{proof}
Since $\Sing\left(f^{-1}\right)\subset D\left(0,\frac12\right)$ we can define the
branches of
the inverse function of $f$ in every simply-connected domain contained in
$\C\setminus \overline{D(0, 1)}$. Because of~\eqref{5p1} such a branch of
$f^{-1}$ maps this domain into $D(a_k,r_k)$ for some $k\in\N$.

\begin{la}\label{5.2}
Let $g: \{w\in {\mathbb C}:
|w|>1\} \sms (-\infty, -1)\to  D(a_k,r_k)$ be a  branch of $f^{-1}$. Then
\begin{equation*}\label{5r}
|g'(w)|\leq \frac{4r_k}{m_k|w|}
\end{equation*}
for $|w|>1$.
\end{la}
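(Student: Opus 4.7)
The plan is to exploit the identity $|g'(w)| = 1/|f'(z)|$, where $z = g(w) \in D(a_k, r_k)$ satisfies $f(z) = w$, and to produce a lower bound for $|f'(z)|$ of the form $m_k |w|/(4 r_k)$. The lower bound on $|f'(z)|$ is essentially already in hand from~\eqref{5o1}, which reads
\[
|f'(z)| \geq \frac{\eps_k m_k}{2 r_k}\left(\frac{r_k}{|z-a_k|}\right)^{m_k} = \frac{m_k T}{2 r_k},
\]
where I set $T := \eps_k\bigl(r_k/|z-a_k|\bigr)^{m_k}$. Thus it suffices to show that $T \geq |w|/2$, and then the desired bound follows by simple substitution: $|g'(w)| = 1/|f'(z)| \leq 2 r_k/(m_k T) \leq 4 r_k/(m_k |w|)$.

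To compare $T$ with $|w| = |f(z)|$, I would use the defining series for $f$. The triangle inequality yields
\[
|w| = |f(z)| \leq T + \sum_{j\neq k} \eps_j\left(\frac{r_j}{|z-a_j|}\right)^{m_j},
\]
so the goal reduces to bounding the error sum by $|w|/2$. For any $j\neq k$ and any $z\in D(a_k, r_k)$, the definition of $d_j$ gives $|z - a_j| \geq \dist(a_j, D(a_k, r_k)) \geq d_j$. Since $d_j > r_j$ by construction, $r_j/|z-a_j| \leq r_j/d_j < 1$, so $(r_j/|z-a_j|)^{m_j} < 1$; summing and invoking~\eqref{5e} gives
\[
\sum_{j\neq k} \eps_j\left(\frac{r_j}{|z-a_j|}\right)^{m_j} < \sum_{j=1}^\infty \eps_j < \frac{1}{2}.
\]
Because $|w| > 1$ by assumption, this error sum is less than $|w|/2$, and hence $|w| \leq T + |w|/2$, which rearranges to $T \geq |w|/2$ as required.

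There is no real obstacle; the argument is a matter of carefully combining the lower bound~\eqref{5o1} for $|f'|$ with an elementary triangle-inequality estimate for $|f|$, both evaluated at the same point $z = g(w)$. The only step that requires a small observation is the comparison $T \gtrsim |w|$, and this relies on two facts already built into the construction: the distance bound $|z - a_j| \geq d_j$ for $z \in D(a_k, r_k)$ (immediate from the definition of $d_j$) and the strict inequality $r_j < d_j$ ensuring $(r_j/d_j)^{m_j} < 1$. Together with $\sum \eps_j < 1/2$ and $|w|>1$, these ingredients close the argument.
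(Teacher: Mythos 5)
Your argument is correct and is essentially the paper's proof: you invert $g'(w)=1/f'(g(w))$, apply the lower bound \eqref{5o1} for $|f'|$, and compare the leading term $\eps_k\bigl(r_k/|z-a_k|\bigr)^{m_k}$ with $|w|=|f(z)|$ via the triangle inequality together with $\sum\eps_j<\tfrac12$ and $|w|>1$. The only cosmetic difference is that you bound $|z-a_j|\geq d_j>r_j$, whereas the paper simply uses disjointness of the closed discs to get $(r_j/|z-a_j|)^{m_j}\leq 1$; these are the same observation.
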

\begin{proof}  For $|w|> 1$ we have
\begin{equation}\label{5s}
|g'(w)|=\frac{1}{|f'(g(w))|}\leq \frac{2r_k}{\eps_km_k}
\left(\frac{|g(w)-a_k|}{r_k}\right)^{m_k}
\end{equation}
by~\eqref{5o1}.
If $z \in D(a_k, r_k)$ and $|f(z)|>1$ then by (\ref{5e})
\[
\begin{aligned}
|f(z)|&\leq  \eps_k\left(\frac{r_k}{|z-a_k|}\right)^{m_k} +
\sum_{\substack{j=0 \\ j\neq k}}^\infty \eps_j
\left(\frac{r_j}{|z-a_j|}\right)^{m_j}\\
& \leq \eps_k\left(\frac{r_k}{|z-a_k|}\right)^{m_k}+
\sum_{\substack{j=0 \\ j\neq k}}^\infty \eps_j\\
& \leq \eps_k\left(\frac{r_k}{|z-a_k|}\right)^{m_k} +
\frac{1}{2}|f(z)|.
\end{aligned}
\]
Thus
\[
|f(z)|\leq 2 \eps_k\left(\frac{r_k}{|z-a_k|}\right)^{m_k}
\]
and with $w=f(z)$ we obtain
\begin{equation}\label{5t}
|w|\leq  2 \eps_k\left(\frac{r_k}{|g(w)-a_k|}\right)^{m_k}.
\end{equation}
It follows from (\ref{5s}) and (\ref{5t}) that
\[
|g'(w)|\leq \frac{2 r_k}{\eps_km_k}\cdot
\frac{2\eps_k}{|w|}=\frac{4r_k}{m_k |w|}.
\]
\end{proof}

 We put $A^*=\{z\in A:|z|>2\}$.

\begin{la}\label{5.3}
 For $k,n\in\N$ we have
\[
\area\left(f^{-k}(A^*)\cap P_n\right)\leq \frac{1}{2^k}.
\]
\end{la}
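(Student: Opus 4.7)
The plan is to prove the lemma by induction on $k$, using three inputs from the preceding setup: the fact that $|f|<1/2$ on $A$ from~\eqref{5p1}, the derivative bound from Lemma~\ref{5.2}, and the combinatorial constraints $\area(A\cap P_n)<1$ and~\eqref{5i}. A preliminary observation, itself established by induction on $k$, is that $f^{-k}(A^*)\subset\bigcup_j D(a_j,r_j)\subset\{|w|>2\}$ for every $k\geq 1$: if $z\in A$ then $|f(z)|<1/2$ by~\eqref{5p1}, so $z$ cannot lie in $f^{-k}(A^*)$, which is contained in $\{|w|>2\}$ by the inductive hypothesis. Consequently $f^{-k}(A^*)\cap P_n$ only intersects the discs $D(a_j,r_j)$ with $j\in I_n$.

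For the base case $k=1$, each pole $a_j$ has multiplicity $m_j$, and since all singular values of $f$ lie in $D(0,1/2)$ and $\Omega:=\{|w|>1\}\setminus(-\infty,-1)$ is simply connected and disjoint from the singular set, there are $m_j$ univalent branches $g_{j,1},\ldots,g_{j,m_j}$ of $f^{-1}$ on $\Omega$ mapping into $D(a_j,r_j)$, each satisfying $|g_{j,\ell}'(w)|\leq 4r_j/(m_j|w|)$ by Lemma~\ref{5.2}. Since $A^*\subset\{|w|>2\}$ meets the slit $(-\infty,-1)$ in a set of two-dimensional measure zero, the change of variables formula gives
\[
\area\bigl(g_{j,\ell}(A^*)\bigr)\leq \frac{16 r_j^2}{m_j^2}\int_{A^*}\frac{dA(w)}{|w|^2}.
\]
Splitting the integral over the annuli $P_n$ and using $\area(A\cap P_n)<1$ yields $\int_{A^*}dA(w)/|w|^2 \leq \sum_{n\geq 1}4^{-n}=1/3$. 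Summing over the $m_j$ branches and then over $j\in I_n$, and invoking~\eqref{5i}, we obtain
\[
\area\bigl(f^{-1}(A^*)\cap P_n\bigr)\leq \frac{16}{3}\sum_{j\in I_n}\frac{r_j^2}{m_j}\leq \frac{16}{3}\cdot\frac{3}{32}=\frac{1}{2}.
\]

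For the inductive step, assuming the bound for $k$, the same branch argument applies with $A^*$ replaced by $f^{-k}(A^*)$, giving (up to measure zero) the decomposition $f^{-(k+1)}(A^*)\cap D(a_j,r_j)=\bigcup_\ell g_{j,\ell}\bigl(f^{-k}(A^*)\cap\Omega\bigr)$ and
\[
\area\bigl(g_{j,\ell}(f^{-k}(A^*))\bigr)\leq \frac{16 r_j^2}{m_j^2}\int_{f^{-k}(A^*)}\frac{dA(w)}{|w|^2}\leq \frac{16 r_j^2}{m_j^2}\cdot\frac{2^{-k}}{3},
\]
where the second inequality uses the inductive hypothesis. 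Summing over $\ell$ and $j\in I_n$ as before yields the bound $2^{-(k+1)}$, completing the induction.

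The main subtlety I expect is verifying that the $m_j$ branches from Lemma~\ref{5.2} really cover $f^{-(k+1)}(A^*)\cap D(a_j,r_j)$ up to measure zero. This amounts to the assertion that $f$ restricted to $D(a_j,r_j)$ is $m_j$-to-one onto (essentially) $\{|w|>1\}$, which follows since the pole has multiplicity $m_j$ and all singular values lie in $D(0,1/2)$. The slit $(-\infty,-1)$ is irrelevant for area computations because $f^{-k}(A^*)\cap(-\infty,-1)$ has zero two-dimensional area.
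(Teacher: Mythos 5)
Your proof is correct and follows essentially the same route as the paper: split $\iint |g'(w)|^2\,dA(w)$ over the annuli $P_l$, use the derivative bound from Lemma~\ref{5.2} together with $\area(A\cap P_l)<1$ and $\sum_l 4^{-l}=\frac13$, sum over the $m_j$ branches and over $j\in I_n$, invoke~\eqref{5i}, and iterate to gain a factor $\frac12$ per level. Your preliminary observation that $f^{-k}(A^*)\subset\bigcup_j D(a_j,r_j)$ (so only $j\in I_n$ contribute in $P_n$) makes explicit a point the paper leaves tacit, but the substance is identical.
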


\begin{proof} Let $g: \{w\in {\mathbb
C}: |w|>1\} \sms (-\infty, -1)\to  D(a_k,r_k)$  be a  branch of
$f^{-1}$. By Lemma~\ref{5.2} we
have
\[\begin{aligned} \iint_{A^*}|g'(w)|^2 dxdy &
=\sum_{l=1}^\infty\iint_{A\cap P_l}|g'(w)|^2 dxdy \\
& \leq \sum_{l=1}^\infty  \area\left(A\cap P_l\right)\cdot \left(
\frac{4r_k}{m_k }\right)^2\cdot \frac{1}{(2^l)^2}\\
& \leq\frac{16 r_k^2}{m_k^2}\sup_{l\geq 1} \area\left(A\cap P_l\right)
\sum_{l=1}^\infty
\frac{1}{2^{2l}}\\
& =\frac{16}{3}\frac{r_k^2}{m_k^2} \sup_{l\geq 1} \area\left(A\cap P_l\right)
\end{aligned}
\]
Noting that there are $m_k$ such branches of $f^{-1}$ we
deduce from~\eqref{5i} that
\[
\area\left(f^{-1}(A^*)\cap P_n\right)
\leq
\sup_{l\geq 1}  \area\left(A\cap P_l\right)
\sum_{k\in I_n}
\frac{r_k^2}{m_k}
\leq \frac12 \sup_{l\geq 1}  \area\left(A\cap P_l\right)
\]
for all $n\in\N$.
Analogously we find that
\[
\area\left(f^{-2}(A^*)\cap P_n\right)
\leq \frac12 \sup_{l\geq 1} \area\left(f^{-1}(A^*)\cap P_l\right)
\]
and induction yields that
\[
\area\left(f^{-k}(A^*)\cap P_n\right)\leq \frac{1}{2^k}
\sup_{l\geq n_0} \area\left(A\cap P_l\right)\leq \frac{1}{2^k}.
\]
\end{proof}

Put $B={\mathbb C}\sms \bigcup_{k=0}^\infty f^{-k}(A)$.
Then
\begin{equation}\label{5v}
\C\setminus B=\bigcup_{k=0}^\infty f^{-k}(A)
\subset A'\cup \bigcup_{k=0}^\infty f^{-k}(A^*).
\end{equation}

\

\begin{la}\label{5.4}
 $\area(B)>0$.
\end{la}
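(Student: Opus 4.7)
The plan is to bound $\area\bigl((\C\setminus B)\cap P_n\bigr)$ by a universal constant independent of $n$, and then contrast this with $\area(P_n)=3\pi\cdot 4^n\to\infty$ to conclude $\area(B\cap P_n)>0$ for all sufficiently large~$n$, so in particular $\area(B)>0$.

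I would intersect the inclusion \eqref{5v} with $P_n$ and apply countable subadditivity of Lebesgue measure to obtain
\[
\area\bigl((\C\setminus B)\cap P_n\bigr) \leq \area(A'\cap P_n) + \sum_{k=0}^\infty \area\bigl(f^{-k}(A^*)\cap P_n\bigr).
\]
By~\eqref{5a} the first summand is strictly less than~$2$. For the series, Lemma~\ref{5.3} gives $\area(f^{-k}(A^*)\cap P_n)\leq 2^{-k}$ for $k\geq 1$, while the $k=0$ term $\area(A^*\cap P_n)$ is bounded by~$1$ via the construction property $\area(A\cap P_n)<1$ (using $A^*\subset A$). Summing the geometric series yields
\[
\area\bigl((\C\setminus B)\cap P_n\bigr) \;<\; 2 + \sum_{k=0}^\infty 2^{-k} \;=\; 4
\]
for every $n\in\N$.

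Since $\area(P_n)=\pi(4^{n+1}-4^n)=3\pi\cdot 4^n$, already for $n=1$ one has $\area(P_n)=12\pi>4$, so
\[
\area(B\cap P_n) \;\geq\; \area(P_n) - \area\bigl((\C\setminus B)\cap P_n\bigr) \;>\; 3\pi\cdot 4^n - 4 \;>\; 0,
\]
and in particular $\area(B)\geq \area(B\cap P_n)>0$.

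There is no substantive obstacle at this final step: all the delicate work has already been invested in choosing the sequences $(a_j)$, $(r_j)$, $(r_j')$, $(\eps_k)$, $(m_k)$ so as to force the three ingredients \eqref{5a}, Lemma~\ref{5.3}, and the inclusion~\eqref{5v}. Lemma~\ref{5.4} is their arithmetic aggregation: the ``bad set'' $\C\setminus B$ has \emph{uniformly} bounded area in each dyadic annulus~$P_n$, whereas $\area(P_n)$ grows like $4^n$, and once the latter exceeds the former a positive-measure remainder is forced.
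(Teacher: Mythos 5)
Your proof is correct and is essentially identical to the paper's: intersect the inclusion~\eqref{5v} with $P_n$, bound $\area(A'\cap P_n)<2$ by~\eqref{5a} and $\sum_{k}\area(f^{-k}(A^*)\cap P_n)\leq 2$ via Lemma~\ref{5.3}, giving $\area(P_n\setminus B)<4$, and compare with $\area(P_n)=3\pi\cdot 4^n\to\infty$. The only cosmetic difference is that you bound the $k=0$ term $\area(A^*\cap P_n)\leq 1$ separately, whereas the paper tacitly absorbs $A^*\subset A\subset A'$ into the $A'$ term and starts the union at $k=1$; both are fine.
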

\begin{proof} It follows from Lemma~\ref{5.3} that
\[
\area\left(\bigcup_{k=1}^{\infty} f^{-k}(A^*)\cap P_n\right)
\leq
\sum_{k=0}^{\infty}\frac{1}{2^{k}}=2.
\]
By (\ref{5a}) we have
\[
\area\left(A'\cap P_n\right) <2.
\]
Thus \eqref{5v} yields that
$\area\left(P_n\setminus B\right) <4$ and the conclusion follows.
\end{proof}

\

\begin{la}\label{5.5}
$\area(B\sms I(f))=0$.
\end{la}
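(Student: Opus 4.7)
The plan is to argue by contradiction, using a Lebesgue density point combined with iterated inverse branches of $f$ controlled by Koebe's distortion theorem. Suppose $\area(B\setminus I(f))>0$. Decomposing $B\setminus I(f)=\bigcup_{R\in\N}\{z\in B:\liminf_{n\to\infty}|f^n(z)|\le R\}$ and, for each $R$, decomposing further over the finitely many indices $j$ with $|a_j|\le R+1$, a double pigeonhole yields an index $j^\ast$ for which
\[
C:=\{z\in B:f^k(z)\in D(a_{j^\ast},r_{j^\ast})\text{ for infinitely many }k\}
\]
has positive Lebesgue measure. Fix a Lebesgue density point $\xi$ of $C$ and a subsequence $k_n\to\infty$ with $w_n:=f^{k_n}(\xi)\in D(a_{j^\ast},r_{j^\ast})$.

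Let $\phi_n$ denote the branch of $f^{-k_n}$ satisfying $\phi_n(w_n)=\xi$. Since $\Sing(f^{-1})\subset D(0,\tfrac12)$ and every iterate $f^k(\xi)$ satisfies $|f^k(\xi)|>2$ (because $\xi\in B$), $\phi_n$ extends univalently to a fixed disc $D=D(a_{j^\ast},\delta)$ lying in a simply connected domain of the type used in Lemma~\ref{5.2}. Combining Lemma~\ref{5.2} with~\eqref{5f} shows that every branch $g$ of $f^{-1}$ building up $\phi_n$ satisfies $|g'|\le 2\eps_k$ on $\{|w|>1\}$, where $k$ indexes the corresponding pole. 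Since~\eqref{5e} forces $\eta:=\sup_k\eps_k<\tfrac12$, the composition $\phi_n$ is uniformly contracting with $\sup_D|\phi_n'|\le(2\eta)^{k_n}\to 0$. Koebe's distortion theorem then guarantees that $\phi_n(D)$ is a topological disc about $\xi$ of bounded eccentricity whose diameter tends to zero.

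Now fix $\delta>r_{j^\ast}$ less than the distance $d_{j^\ast}$, so that $D$ meets no $D(a_l,r_l)$ for $l\ne j^\ast$; then $A\cap D$ has positive Lebesgue density $\alpha>0$ in $D$. For every $w\in A\cap D$ one has $\phi_n(w)\in f^{-k_n}(A)\subset\C\setminus B$. Passing to a concentric sub-disc $D'\subset D$ that still contains $D(a_{j^\ast},r_{j^\ast})$ and applying the area-distortion version of Koebe's theorem, we obtain a universal constant $c>0$ with
\[
\dens\bigl(\C\setminus B,\phi_n(D')\bigr)\geq c\alpha
\]
for all $n$. As $\xi\in\phi_n(D')$ and the regions $\phi_n(D')$ shrink to $\xi$ with bounded eccentricity, this contradicts $\xi$ being a Lebesgue density point of $B$ (which contains $C$), forcing $\area(B\setminus I(f))=0$.

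The step I expect to be the main obstacle is the uniform contraction estimate for $\phi_n$. It pivots on translating Lemma~\ref{5.2}'s bound $|g'(w)|\le 4r_k/(m_k|w|)$ into $|g'|\le 2\eps_k$ via~\eqref{5f}, after which the summability of $(\eps_k)$ from~\eqref{5e} supplies a common factor $2\eta<1$ controlling every branch; this is precisely where the delicate choices~\eqref{5e}--\eqref{5h} made in the construction pay off. A secondary, more routine issue is the existence of a $\xi$-independent univalence disc for all $\phi_n$, which reduces to observing that $\Sing(f^{-1})$ is buried deep inside $D(0,\tfrac12)$ and that suitable slit domains of the type in Lemma~\ref{5.2} contain $D$ throughout the pullback.
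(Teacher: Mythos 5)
Your argument is correct in spirit and reaches the right conclusion, and it runs parallel to the paper's proof (density point of $B$, pulled-back disc carrying a definite proportion of $\C\setminus B$, Koebe distortion, shrinking radii). But the selection step is genuinely different. The paper fixes a density point $\xi$ of $B\setminus I(f)$ directly, uses $\liminf|f^n(\xi)|<\infty$ to extract a bounded subsequence $w_l=f^{m_l}(\xi)$, passes to a convergent subsequence $w_l\to w$, and works on the fixed disc $D(w,1)$; the crucial observation that $\area(D(w,1)\cap A)>0$ comes for free because every $D(a_j,r_j)$ has radius $<1$, so no finite collection of them can cover $D(w,1)$. You instead decompose $B\setminus I(f)$ over bounded ``escape levels'' and then over the finitely many poles $a_j$ with $|a_j|$ below that level, pigeonholing to a single pole $a_{j^\ast}$ visited infinitely often by a positive-measure set, and pull back a disc around $a_{j^\ast}$. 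Both approaches work; the paper's Bolzano--Weierstrass step is a bit cleaner, as it avoids the double pigeonhole and makes the positive density of $A$ in the relevant disc automatic rather than tied to the choice of $\delta$ relative to $r_{j^\ast}$ and $d_{j^\ast}$.

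Two details in your version need a little care. First, taking any $\delta$ with $r_{j^\ast}<\delta<d_{j^\ast}$ is not quite enough for the univalent extension of $\phi_n$: you also need $D(a_{j^\ast},\delta)$ to stay clear of $D(0,\tfrac12)\supset\Sing(f^{-1})$ (and in fact of $\overline{D(0,1)}$, to keep Lemma~\ref{5.2}'s hypothesis $|w|>1$ along the entire pullback), so you should cap $\delta$ by something like $\min(d_{j^\ast},|a_{j^\ast}|-1)$. Since $D(a_{j^\ast},r_{j^\ast})\subset\{|z|>2\}$ forces $|a_{j^\ast}|-r_{j^\ast}>2$, this still leaves room for $\delta>r_{j^\ast}$. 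Second, because your discs $D'$ are centred at $a_{j^\ast}$ rather than at $w_n$, the point $\xi=\phi_n(w_n)$ is not the Koebe centre of $\phi_n(D')$; the ``bounded eccentricity'' claim then needs a short extra step, namely that $w_n\in D(a_{j^\ast},r_{j^\ast})$ is compactly contained in $D'$, so distortion at $w_n$ still gives $D(\xi,c'\rho_n)\subset\phi_n(D')\subset D(\xi,\rho_n)$ with $c'$ depending only on $r_{j^\ast},\delta',\delta$. The paper sidesteps this by centring $D_l$ at $w_l$ itself. Finally, your explicit contraction bound $\sup_D|\phi_n'|\le(2\eta)^{k_n}$ with $\eta=\sup_k\eps_k<\tfrac12$ (derived from Lemma~\ref{5.2} and~\eqref{5f}--\eqref{5e}) is a nice way to make quantitative what the paper states tersely as ``$(\ref{5o1})$ shows that $g_{m_l}'(w_l)\to 0$''; it is indeed the same ingredients (Lemma~\ref{5.2} plus~\eqref{5f}) doing the work.
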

\begin{proof} Suppose that $\area(B\sms I(f))>0$ and
let $\xi$ be a density point of $B\sms I(f)$.
Since $\xi\in B$ we have $f^m(\xi)\in {\mathbb C} \sms A$ and thus
in particular $|f^m(\xi)| > 2$  for $m\in \mathbb N$.
As $\xi \notin I(f)$ there is a sequence $(m_l)$ tending to
$\infty$ and a constant $R>0$
such that  $ |f^{m_l}(\xi)| \leq R$. Put $w_l=f^{m_l}(\xi)$.
Passing to a subsequence if necessary we may assume that
$w_l\to w$ where $2\leq |w|\leq R$.
Since the disks $D(a_j,r_j)$ have radius less than $1$ we have
\[
\a:=\area(D(w,1)\cap A) >0.
\]
Since
$\Sing\left(f^{-1}\right)\subset D\left(0,\frac12\right)$
and since $f\left(D\left(0,\frac12\right)\right)\subset
f(A)\subset D\left(0,\frac12\right)$ the branch
$g_{l}$ of $f^{-m_l}$ which maps $w_l$ onto $\xi$
exists as a univalent function in $D\left(w,\frac32\right)$.
Fix $\delta$ with $0<\delta<\frac14$ and choose $l$ so large that $|w_l-w|\leq \delta$.
Put $D_l=D\left(w_l,1+\delta\right)$. Then $D(w,1)\subset D_l\subset D\left(w,1+2\delta\right)$
and Lemma~\ref{2.1} yields with $\lambda=2(1+2\delta)/3$ that
\[
\sup_{u,v \in  D_l}
\frac{|g_l'(u)|}{|g_l'(v)|}\leq K:=\left(\frac{1+\lambda}{1-\lambda}\right)^4.
\]
We obtain
\[
\dens(g_{l}(A),g_{l}(D_l))\geq \frac{1}{K^2}
\dens(A,D_l)\geq \frac{1}{K^2}
\dens(D(w,1)\cap A,D_l)=\frac{\alpha}{K^2\pi(1+\delta)^2}.
\]
Lemma~\ref{2.1} also yields that there exists constants $\gamma_1,
\gamma_2>0$ such that
\[
D\left(\xi,\gamma_1 |g_l'(w_l)|\right)\subset g_{l}(D_l)\subset
D\left(\xi,\gamma_2 |g_l'(w_l)|\right).
\]
With $r_l=\gamma_2 |g_l'(w_l)|$ it follows that
\[
\dens\left(g_{l}(A),D\left(\xi,r_l\right)\right)
>\frac{\gamma_1^2\alpha}{\gamma_2^2 K^2\pi(1+\delta)^2}.
\]
Also, (\ref{5o1}) shows that
$g_{m_l}'(w_l) \to 0$ and hence $r_l\to 0$ as $l\to\infty$.
Since $ g_{l}(A)\cap B=\emptyset$ for all $l\in\N$
this contradicts
the assumption that $\xi$ is a density point of $B$.
\end{proof}

%\

Theorem~\ref{thm4} follows from Lemmas~\ref{5.4} and~\ref{5.5}.

\end{document}